\newtheorem{thm}{Theorem}[section]
\newtheorem{cor}[thm]{Corollary}
\newtheorem{prop}[thm]{Proposition}
\newtheorem{lem}[thm]{Lemma}
\newtheorem{conj}[thm]{Conjecture}
\theoremstyle{definition}
\newtheorem{exmp}[thm]{Example}
\newtheorem{rmk}[thm]{Remark}
\newcommand{\Out}{\operatorname{Out}}
\newcommand{\Aut}{\operatorname{Aut}}
\newcommand{\hAut}{\operatorname{hAut}}
\newcommand{\iso}{\operatorname{Iso}}
\newcommand{\sgn}{\mathrm{sgn}}
\newcommand{\br}{\mathrm{br}}
\newcommand{\Conf}{\operatorname{Conf}}
\newcommand{\into}{\hookrightarrow}
\newcommand{\coker}{\operatorname{coker}}
\newcommand{\End}{\operatorname{End}}
\DeclareMathOperator{\id}{id}
\newcommand{\gp}{\mathcal{G}}
\newcommand{\col}{\colon}
\newcommand{\Specht}{\chi}
\def\C{\mathbb{C}}
\def\Q{\mathbb{Q}}
\def\R{\mathbb{R}}
\def\Z{\mathbb{Z}}
\def\calM{\mathcal{M}}
\title[Homology representations of compactified configurations on graphs]{Homology representations of compactified configurations on graphs applied to $\mathcal{M}_{2,n}$}
\author{Christin Bibby}
\address{Department of Mathematics, Louisiana State University, Baton Rouge, LA 70803}
\email{\url{bibby@math.lsu.edu}}
\author{Melody Chan}
\address{Department of Mathematics, Brown University, Box 1917, Providence, RI 02912}
\email{\url{melody_chan@brown.edu}}
\author{Nir Gadish}
\address{Department of Mathematics, Massachusetts Institute of Technology, Cambridge, MA 02139}
\email{\url{ngadish@mit.edu}}
\author{Claudia He Yun}
\address{Department of Mathematics, Brown University, Box 1917, Providence, RI 02912}
\email{\url{he_yun@alumni.brown.edu}}
\subjclass[2020]{Primary
05C10; 
Secondary 
14H10, 
14Q05, 
14T20, 
55R80, 
55P65}
\keywords{Tropical curves,
moduli spaces of curves,
compactified configuration spaces on graphs,
outer automorphisms of free groups}
\begin{document}

\begin{abstract}
We obtain new calculations of the top weight rational cohomology of the moduli spaces $\mathcal{M}_{2,n}$, equivalently the rational homology of the tropical moduli spaces $\Delta_{2,n}$, as a representation of $S_n$.  These calculations are achieved fully for all $n\leq 11$, and partially---for specific irreducible representations of $S_n$---for $n\le 22$.   We also present conjectures, verified up to $n=22$, for the multiplicities of the irreducible representations $\mathrm{std}_n$ and $\mathrm{std}_n\otimes \mathrm{sgn}_n$. 

We achieve our calculations via a comparison with the homology of compactified configuration spaces of graphs. These homology groups are equipped with commuting actions of a symmetric group and the outer automorphism group of a free group.
In this paper, we construct an  
efficient
free resolution for these homology representations, from which 
we extract calculations on irreducible representations one at a time, simplifying the calculation of these homology representations. 
\end{abstract}
	
\maketitle

\section{Introduction}

\subsection{Main results}

The moduli spaces $\Delta_{g,n}$ of tropical curves are combinatorial moduli spaces which are canonically identified with the boundary complex of the Deligne-Mumford-Knudsen compactification $\overline{\calM}_{g,n}$ of the moduli spaces of algebraic curves. See \cite{ACP15} and \cite{CGP1}. Consequently, by work of Deligne (\cite{PMIHES_1971__40__5_0},\cite{PMIHES_1974__44__5_0}), there is a canonical $S_n$-equivariant isomorphism between $\widetilde H_*(\Delta_{g,n};\Q)$ and the top-weight rational cohomology of $\calM_{g,n}$: \begin{equation}
    \widetilde{H}_{k-1}(\Delta_{g,n};\Q) \cong \mathrm{Gr}_{6g-6+2n}^WH^{6g-6+2n-k}(\mathcal{M}_{g,n};\Q).
    \label{isom: Top weight cohomology of Mgn is identified with homology of Deltagn}
\end{equation}

In this work we compute, for genus $g=2$, the homology groups $\widetilde H_*(\Delta_{2,n};\Q)$ as representations of $S_n$ in a range beyond what was previously accessible, using an approach centered on a compactified graph configuration space.  

\begin{thm}\label{thm:main}
The rational homology $\widetilde{H}_*(\Delta_{2,n};\Q)$ is supported in degrees $* = n+1$ and $n+2$, with the character of $\widetilde{H}_{n+1}(\Delta_{2,n};\Q)$ as an $S_n$-representation for $n\leq 11$ given in Table \ref{table:data}.  
Partial irreducible decompositions of $\widetilde{H}_{n+1}(\Delta_{2,n};\Q)$ for $12\le n \le 17$ are given in Table~\ref{table:partialdata}.
\end{thm}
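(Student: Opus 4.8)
The plan is to compute $\widetilde H_*(\Delta_{2,n};\Q)$ directly as an $S_n$-representation and then read off the top-weight cohomology of $\calM_{2,n}$ through the comparison isomorphism recalled above. The starting point is that genus-$2$ tropical curves are built from the finitely many connected graphs of first Betti number $2$ with all vertices of valence at least $3$ --- the theta graph, the dumbbell graph, and the wedge of two circles --- so I would model $\Delta_{2,n}$ by placing $n$ marked points on these graphs and compactifying the resulting configuration spaces. This packages $\widetilde H_*(\Delta_{2,n};\Q)$ as the homology of a complex assembled from the groups $H_*(\overline{\Conf}_n(G);\Q)$ as $G$ ranges over genus-$2$ graphs, carrying commuting actions of $S_n$ (permuting the marked points) and of the outer automorphism group $\Out(F_2)\cong \gl_2(\Z)$ (acting through homotopy symmetries of the graphs).

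Next I would compute the compactified configuration homology $H_*(\overline{\Conf}_n(G);\Q)$ for each of the three graph types as an $S_n$-representation. The role of the compactification is that its boundary strata --- where points collide or migrate to vertices --- are governed by lower configuration data, so each $H_*(\overline{\Conf}_n(G);\Q)$ can be presented by a manageable cellular or bar-type chain complex. Assembling these groups over all $G$, and accounting for the $\Out(F_2)$-symmetry, is where the main technical device enters: I would encode the homology as a representation over the relevant combinatorial category and construct an efficient free resolution of it by induced (representable) modules, so that the $S_n$-equivariant homology of $\Delta_{2,n}$ is computed as the homology of the resolution after taking the appropriate (co)invariants.

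The decisive simplification I would aim for is to isolate the multiplicity of one fixed irreducible $S_n$-representation $V_\lambda$ at a time. Because the induced modules appearing in the resolution restrict and decompose cleanly, the multiplicity of $V_\lambda$ in $\widetilde H_{n+1}(\Delta_{2,n};\Q)$ should reduce to an Euler-characteristic or rank computation whose size depends on $\lambda$ rather than on the full regular representation; this is what makes the partial computations for $12\le n\le 17$ feasible even when the complete character is out of reach. The support statement --- that the homology is concentrated in the top two degrees $n+1$ and $n+2$, where $n+2 = \dim\Delta_{2,n}$ --- I would obtain separately, combining a bound on the length of the resolution and on the dimensions of the contributing strata with a high-connectivity estimate for $\Delta_{2,n}$.

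The main obstacle will be controlling the size of the resolution tightly enough to reach $n=11$ for the full character and $n=17$ per irreducible: the chain groups grow superexponentially in $n$, so the substance of the argument is producing a resolution short and sparse enough that the $S_n$- and $\Out(F_2)$-twisted linear algebra over $\Q$ stays within computational range. A secondary difficulty is the careful bookkeeping of the $\Out(F_2)$-action, especially the orientation and sign contributions coming from graph automorphisms that reverse edges, since these determine precisely which configuration classes survive to the homology of $\Delta_{2,n}$.
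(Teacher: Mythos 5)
Your overall architecture---compactified configuration spaces on genus-$2$ graphs carrying commuting $S_n$- and $\Out(F_2)$-actions, a free resolution over $\Z[S_n]$, and extraction of one irreducible multiplicity at a time via Schur's lemma---matches the paper's in spirit, but the two steps that actually make the computation work are missing, and you flag the second of them yourself as an unresolved ``main obstacle.'' First, you propose assembling a complex over all three genus-$2$ graphs (theta, dumbbell, figure-eight). The paper avoids this entirely: by \cite[Theorem 1.1]{cgp-marked} the bridge locus $\Delta_{2,n}^{\br}$ is contractible, and its complement is exactly the locus of theta-type curves, so $\Delta_{2,n}\simeq (S^2\wedge \Conf_n(\Theta)^+)/\iso(\Theta)$ (Theorem \ref{thm: red cohom of Delta2n and red cohom of Conf}); only the theta graph contributes, with an explicit $\sgn_3$-twist and $\iso(\Theta)$-coinvariants. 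Without this reduction you must justify why your assembled complex computes $\widetilde H_*(\Delta_{2,n})$ at all, and you must handle the dumbbell and figure-eight strata, which the paper never needs to touch. Second, and relatedly: the relevant compactification is the \emph{one-point} compactification, not a Fulton--MacPherson-type compactification with boundary strata governed by ``lower configuration data'' as your description suggests. The one-point compactification is what is homotopy-invariant for compact spaces (Proposition \ref{prop: compactified conf_n is homotopy invariant}), which lets one replace $\Theta$ by the rose $R_2$ and is the reason an $\Out(F_2)$-action exists on homology in the first place.

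The ``efficient free resolution'' you hope for is not something one has to search for: $\Conf_n(R_g)^+$ admits an explicit cell structure with one $0$-cell and cells only in dimensions $n-1$ and $n$, on which $S_n$ acts \emph{freely} (Lemma \ref{lem:resolution}). The cellular chain complex is therefore already a two-step complex of free $\Z[S_n]$-modules with boundary matrix over $\Z[S_n]$, and applying an irreducible $\rho$ entrywise reduces each homology computation to the kernel and cokernel of a single matrix of size roughly $\binom{n+1}{1}\dim V_\rho$ (Lemma \ref{lem:splitting irreducibles}). This also disposes of your separate ``high-connectivity estimate'': concentration of $\widetilde H_*(\Conf_n(R_2)^+)$ in degrees $n-1$ and $n$, shifted by the $S^2$-smash, immediately gives concentration of $\widetilde H_*(\Delta_{2,n})$ in degrees $n+1$ and $n+2$ (which is also already in \cite{chan2015topology}). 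Finally, the $\Out(F_2)$-action is not induced by actual graph automorphisms acting on chains; it is realized by explicit chain-level operators (flips, swaps, and a transvection, Lemma \ref{lem:out acting on chains}) that satisfy the group relations only up to homotopy---a subtlety your bookkeeping plan would need to confront, since one only ever takes $\iso(\Theta)$-coinvariants on homology, not on chains.
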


\noindent 
Given that the equivariant Euler characteristic of $\Delta_{2,n}$ is known (see \cite{CFGP}), Table \ref{table:data} is sufficient to determine the entire homology representation.  See \S\ref{sec:related work} for a discussion of previous related work relating graph complexes and compactified configuration spaces.

The first $8$ rows of Table \ref{table:data} were recently computed by the fourth author, see \cite{Yun2021}. Our current approach gives data well beyond what was feasible with those techniques. For example, even the dimension of $\widetilde{H}_{12}(\Delta_{2,11};\Q)$ was not known: it is 850732.

\begin{table}
{\small
\begin{tabular}{|r|p{.9\textwidth}|}
\hline
        $n$ & Character of $\widetilde{H}_{n+1}(\Delta_{2,n};\Q)$ \\
\hline  
0 & 0 \\
\hline
1 & 0 \\
\hline  
        2 & 0 \\
\hline        
        3 & 0 \\
\hline        
        4 & $\Specht_{(4)}$ \\
\hline        
        5 & $\Specht_{(3,2)}$ \\
\hline        
        6 & $\Specht_{(4,1^2)} + \Specht_{(3,2,1)}$ \\
\hline        
        7 & $\Specht_{(5,1^2)} + \Specht_{(4,3)} + \Specht_{(4,2,1)} + \Specht_{(4,1^3)} + \Specht_{(3^2,1)} + \Specht_{(3,2,1^2)} + \Specht_{(2^3,1)} + \Specht_{(1^7)}$ \\
\hline        
        8 & $\Specht_{(8)} + \Specht_{(6,2)} + \Specht_{(5,3)} + 2\Specht_{(5,2,1)} + \Specht_{(5,1^3)} + 2\Specht_{(4,3,1)} + 2\Specht_{(4,2^2)} + 2\Specht_{(4,2,1^2)} + \Specht_{(4,1^4)} + \Specht_{(3^2,1)} + \Specht_{(3^2,1^2)} + 2\Specht_{(3,2^2,1)} + 2\Specht_{(3,2,1^3)} + \Specht_{(3,1^5)}$ \\
\hline        
        9 & $2\Specht_{(7, 2)} + \Specht_{(6, 3)} + 3\Specht_{(6, 2, 1)} + \Specht_{(6,1^3)} + 2\Specht_{(5, 4)} + 3\Specht_{(5, 3, 1)} + 5\Specht_{(5, 2^2)} + 4\Specht_{(5, 2, 1^2)} + 3\Specht_{(5,1^4)} + 3\Specht_{(4^2, 1)} + 4\Specht_{(4, 3, 2)} + 5\Specht_{(4, 3, 1^2)} + 5\Specht_{(4, 2^2, 1)} + 4\Specht_{(4, 2, 1^3)} + \Specht_{(4, 1^5)} + 4\Specht_{(3^2, 2, 1)} + 4\Specht_{(3^2, 1^3)} + 3\Specht_{(3, 2^3)} + 2\Specht_{(3, 2^2, 1^2)} + 3\Specht_{(3, 2, 1^4)} + \Specht_{(2^4, 1)} + \Specht_{(2^3, 1^3)} + \Specht_{(2^2, 1^5)} + \Specht_{(1^9)}$ \\
\hline        
        10 & $2\Specht_{(8, 1^2)} + 2\Specht_{(7, 3)} + 4\Specht_{(7, 2, 1)} + 3\Specht_{(7,1^3)} + 2\Specht_{(6, 4)} + 9\Specht_{(6, 3, 1)} + 4\Specht_{(6, 2^2)} + 8\Specht_{(6, 2, 1^2)} + 2\Specht_{(6, 1^4)} + 7\Specht_{(5, 4, 1)} + 10\Specht_{(5, 3, 2)} + 15\Specht_{(5, 3, 1^2)} + 12\Specht_{(5, 2^2, 1)} + 9\Specht_{(5, 2, 1^3)} + 2\Specht_{(5, 1^5)} + 6\Specht_{(4^2, 2)} + 6\Specht_{(4^2, 1^2)} + 6\Specht_{(4, 3^2)} + 16\Specht_{(4, 3, 2, 1)} + 11\Specht_{(4, 3, 1^3)} + 7\Specht_{(4, 2^3)} + 13\Specht_{(4, 2^2, 1^2)} + 8\Specht_{(4, 2, 1^4)} + 3\Specht_{(4, 1^6)} + 6\Specht_{(3^3, 1)} + 4\Specht_{(3^2, 2^2)} + 10\Specht_{(3^2, 2, 1^2)} + 3\Specht_{(3^2, 1^4)} + 6\Specht_{(3, 2^3, 1)} + 7\Specht_{(3, 2^2, 1^3)} + 3\Specht_{(3, 2, 1^5)} + 2\Specht_{(3, 1^7)} + \Specht_{(2^4, 1^2)} + 2\Specht_{(2^3, 1^4)}$ \\
\hline 
        11 & $3\Specht_{(9, 1^2)}+
3\Specht_{(8, 3)}+
5\Specht_{(8, 2, 1)}+
3\Specht_{(8, 1^3)}+
2\Specht_{(7, 4)}+
16\Specht_{(7, 3, 1)}+
5\Specht_{(7, 2^2)}+
16\Specht_{(7, 2, 1^2)}+
2\Specht_{(7, 1^4)}+
4\Specht_{(6, 5)}+
15\Specht_{(6, 4, 1)}+
23\Specht_{(6, 3, 2)}+
28\Specht_{(6, 3,1^2)}+
24\Specht_{(6, 2^2, 1)}+
21\Specht_{(6, 2, 1^3)}+
5\Specht_{(6, 1^5)}+
10\Specht_{(5^2, 1)}+
19\Specht_{(5, 4, 2)}+
28\Specht_{(5, 4, 1^2)}+
21\Specht_{(5, 3^2)}+
50\Specht_{(5, 3, 2, 1)}+
28\Specht_{(5, 3, 1^3)}+
13\Specht_{(5, 2^3)}+
38\Specht_{(5, 2^2, 1^2)}+
17\Specht_{(5, 2, 1^4)}+
7\Specht_{(5, 1^6)}+
8\Specht_{(4^2, 3)}+
29\Specht_{(4^2, 2, 1)}+
20\Specht_{(4, 4, 1^3)}+
25\Specht_{(4, 3^2, 1)}+
28\Specht_{(4, 3, 2^2)}+
48\Specht_{(4, 3, 2, 1^2)}+
22\Specht_{(4, 3, 1^4)}+
22\Specht_{(4, 2^3, 1)}+
25\Specht_{(4, 2^2, 1^3)}+
11\Specht_{(4, 2, 1^5)}+
2\Specht_{(4, 1^7)}+
13\Specht_{(3^3, 2)}+
8\Specht_{(3^3, 1^2)}+
22\Specht_{(3^2, 2^2, 1)}+
20\Specht_{(3^2, 2, 1^3)}+
11\Specht_{(3^2, 1^5)}+
4\Specht_{(3, 2^4)}+
15\Specht_{(3, 2^3, 1^2)}+
8\Specht_{(3, 2^2, 1^4)}+
6\Specht_{(3, 2, 1^6)}+
3\Specht_{(2^5, 1)}+
4\Specht_{(2^4, 1^3)}+
2\Specht_{(2^3, 1^5)}+
2\Specht_{(2^2, 1^7)}+
\Specht_{(1^{11})}$ \\
\hline
\end{tabular}
}
\vspace{.2cm}

    \caption{Character of $\widetilde H_{n+1}(\Delta_{2,n};\Q)$ for $n\leq 11$.}
   \label{table:data}
\end{table}

\noindent 
Table \ref{table:partialdata} in \S\ref{sec:tabulation} shows the partial calculations for multiplicities of certain small $S_n$-irreducibles in the range $ 12 \le n \le 17$.  For $18 \leq n \leq 22$, we obtained multiplicities for $\Specht_{(n)}$, $\Specht_{(1^n)}$, $\Specht_{(n-1,1)}$ and $\Specht_{(2,1^{(n-2)})}$, and for $23 \le n \le 25$, we obtained multiplicities for $\Specht_{(n)}$, $\Specht_{(1^n)}$ only.  
The data is extensive enough to suggest patterns in the multiplicities of the standard representation $\chi_{(n-1,1)}$ and its sign twist $\chi_{(2,1^{n-2})}$. See Conjecture \ref{conj:std} and surrounding discussion.

We now outline the key steps to our calculations. 
Together, they establish Theorem \ref{thm:main}, the main theorem of this paper. 

\subsubsection{Reduction to compactified configurations on a theta graph}
We immediately leave the tropical world and work instead with $\Conf_n(G)^+$, the one-point compactification of the configuration space of $n$ distinct marked points on a graph $G$. 
In genus $g=2$, the tropical moduli space $\Delta_{2,n}$ is directly related to a single such compactified configuration space. 
Specifically, Theorem \ref{thm: red cohom of Delta2n and red cohom of Conf} establishes a homotopy equivalence inducing the following isomorphism of $S_n$-representations:
\begin{equation} \label{eq:intro-homology of Delta is Conf}
    \widetilde{H}_i(\Delta_{2,n};\Q) \cong (\sgn_3 \otimes \widetilde{H}_{i-2}(\Conf_n(\Theta)^+;\Q))_{\mathrm{Iso}(\Theta)},
\end{equation}
where $\Theta$ is the graph  with two vertices and three parallel edges between them, $\sgn_3$ is the sign representation of $S_3$ in the automorphism group $\mathrm{Iso}(\Theta) \cong S_2\times S_3$,
and the subscript $\iso(\Theta)$ denotes the coinvariant quotient.

\subsubsection{Reduction to compactified configurations on a rose graph}
Note that the graph $\Theta$ is homotopy equivalent to a wedge of two circles. More generally, any finite graph $G$ with first Betti number $g$ is homotopy equivalent to a rose graph $R_g = \vee_{g}S^1$.  In fact, 
a homotopy equivalence of compact Hausdorff spaces induces a homotopy equivalence of their compactified configuration spaces; the analogous statement is not true for uncompactified configuration spaces.
See Proposition \ref{prop: compactified conf_n is homotopy invariant}.  
So it suffices to work with $\Conf_n(R_2)^+$, or more generally $\Conf_n(R_g)^+$ for any $g$. 
Proposition \ref{prop: compactified conf_n is homotopy invariant} endows the homology $H_*(\Conf_n(R_g)^+;\Q)$ with a canonical action of the group $\Out(F_g)$ of outer automorphisms of the free group on $g$ letters.
Moreover, a consequence of Proposition \ref{prop:iso(G) acting through Out} will be that a homotopy equivalence $G\xrightarrow{\ \sim\ } R_g$ induces a group homomorphism $\iso(G)\to\Out(F_g)$ so that the induced isomorphism 
\[H_*(\Conf_n(G)^+;\Q)\cong H_*(\Conf_n(R_g)^+;\Q)\]
is $\iso(G)$-equivariant. 
This, along with \eqref{eq:intro-homology of Delta is Conf}, reduces the computation of $\widetilde{H}_*(\Delta_{2,n};\Q)$ 
to computing the actions of $S_n$ and $\Out(F_2)$ on the homology of $\Conf_n(R_2)^+$.

\subsubsection{Cellular decomposition of compactified configurations on a rose graph}
The remaining goal in \S\ref{sec:homology of conf} is then to understand  $\widetilde{H}_*(\Conf_n(R_g)^+)$ as a representation of both $S_n$ and $\Out(F_g)$. The fundamental tool is an $S_n$-equivariant cell structure on the configuration space $\Conf_n(R_g)^+$, in which cells are permuted freely. This structure implies that the homology of $\Conf_n(R_g)^+$ is computed by a 2-step complex of free $\Z[S_n]$-modules, where the boundary map and the action of $\Out(F_g)$ are represented by explicit matrices with entries in $\Z[S_n]$. See Lemmas \ref{lem:resolution}, \ref{lem:boundary}, and \ref{lem:out acting on chains}.

\subsubsection{Improved computational efficiency through representation theory}
The presentation of homology by free $S_n$-modules allows for particularly efficient computations. Indeed, specializing to rational coefficients, Schur's lemma lets us work one irreducible at a time, performing any homology calculation at the level of multiplicity spaces of individual irreducible representations of $S_n$. See Lemma \ref{lem:splitting irreducibles}. 
This reduces 
the size of the matrices involved by a factor of at least $\sqrt{n!}$.

\medskip

Finally, after the above reductions, we implemented the resulting calculation  in SageMath, from which we obtained the data in Tables \ref{table:data} and \ref{table:partialdata} that prove Theorem \ref{thm:main}.  See \S\ref{sec:tabulation} for more details on the SageMath computations.

\subsection{Related work}\label{sec:related work}
Our initial motivation in this paper comes from tropical geometry, particularly the connection to cohomology of moduli spaces of curves.  Our calculations, however, 
are also connected to several other topics in geometry and topology, adding potential interest to our work.  We touch on several of them here: spaces of long embeddings and string links; modular operads; and representations of mapping class groups. We remark that our techniques do not apply to the {\em uncompactified} configuration spaces of graphs.

\subsubsection{$S_n$-equivariant homology of $\Delta_{g,n}$}
Here is a brief survey of previous calculations. The case $g=2$ is the first case in which the topology of $\Delta_{g,n}$ is not fully understood. 
\begin{itemize}
    \item When $g=0$ and $n\geq 4$, \cite{ROBINSON1996245} prove that $\Delta_{0,n}$ has homotopy type of a wedge of spheres of dimension $n-3$ and give a formula for the character of the $S_n$-representation occurring in the top degree integral homology $H_{n-3}(\Delta_{0,n};\Z)$.
    \item When $g=1$ and $n>0$,
       \cite{getzler1999} computes the $S_n$-equivariant Serre characteristic of $\mathcal{M}_{1,n}$, from which the character of $H_{n-1}(\Delta_{1,n};\Q)$ can also be derived. 
    Moreover, \cite{cgp-marked} prove that $\Delta_{1,n}$ has homotopy type of a wedge of spheres of dimension $n-1$. 
    \item     When $g=2$, 
    \cite{chan2015topology} proves that the homology of $\Delta_{2,n}$ is concentrated in its top two degrees, and computes numerically the Betti numbers for $n\leq 8$. 
    \cite{Yun2021} computes these homology groups $S_n$-equivariantly. 
    \item For all $g,n\ge 0$ with $2g-2+n>0$, \cite{CFGP} proves a general formula for the $S_n$-equivariant Euler characteristic for $\Delta_{g,n}$, as conjectured by D. Zagier.
\end{itemize}

\subsubsection{Spaces of long embeddings and string links}

The rational homotopy type of spaces of ``long embeddings" $\operatorname{Emb}_c(\R^m,\R^n)$ is given by the homology of certain ``hairy graph complexes''  introduced by Arone-Turchin \cite{arone-turchin-graph-complexes}. These complexes have a geometric interpretation as homology with local coefficients of the tropical moduli spaces, as we will explain more in forthcoming work.    
These complexes in fact depend only on the parity of $m$ and $n$, up to degree shift. When $n$ is even and $m$ is odd, the decoration attached to each graph is the Hochschild-Pirashvili homology of the graph, which is equivalent to the collection of $\Out(F_g)$-representations on the $S_k$-invariant parts of $\widetilde{H}_*(\Conf_k(R_g)^+)$ for all $k$, which we study below in \S\ref{sec:homology of conf} (see \cite[Theorem 1]{gadish-hainaut}).
Similarly, our sign multiplicity spaces coincide with the hairy graph homology when $n$ and $m$ are both even.
See \cite[Remark 5.2]{turchin2017commutative} for applications of the other isotypic components to rational homotopy groups of the space of string links, and
see \cite[Section 2.5]{turchin-willwacher-hochschild} for an interpretation of the isotypic components as the {\em bead representations}.

The reason the above two complexes ($n$ even, $m$ even or odd) only relate to our trivial and sign computations is that the ``hairs'' in hairy graph complexes are unlabeled.  In \cite{tsopmene-turchin-euler}, the authors study spaces of string links via complexes of graphs with labeled hairs (possibly with labels repeated or missing). These are equivalent to ours in the sense that ours are a special case, while theirs can be obtained from ours by taking invariants under Young subgroups of symmetric groups.

In genus $2$ specifically, we refer to the work of Conant--Costello--Turchin--Weed \cite{Conant-Costello-Turchin-Weed}, who show that only the graph $\Theta$ contributes to the hairy graph homology, which furthermore takes the form $(\sgn_3\otimes V)_{S_2\times S_3}$ for some $V$ computed by a $3$-step complex. This echos our Theorem \ref{thm: red cohom of Delta2n and red cohom of Conf}, and in fact 
proves the specialization  
to the trivial and sign isotypic components. Our further reduction in this paper from $\Theta$ to $R_2$ (and indeed from any graph to $R_g$), as well as the richer structure coming from the $S_n$-action on the $n$ labels, is not studied in that paper. See also Remark \ref{rmk:triv-sgn} for further connections.

\subsubsection{Representations of mapping class groups}
In a different direction, Moriyama \cite{moriyama} studies representations of the mapping class group of a surface of genus $g$ with one boundary component. 
These representations are the cohomology of the compactified configuration space on the surface with an additional point removed from the boundary. Since a punctured surface is not compact, the compactified configuration space is not homotopy equivalent to $\Conf_n(R_{2g})^+$ (in contrast with Proposition \ref{prop: compactified conf_n is homotopy invariant}), and has homology concentrated in degree $n$ only. 
Nevertheless, Moriyama \cite[Section 4]{moriyama} accesses his cohomology using a cell structure whose only nontrivial cells are $n$-cells, which are exactly the top-dimensional cells that we consider below. In particular, his setup does not include $(n-1)$-cells, the existence of which constitutes the central computational challenge in our work.

\subsubsection{Modular operads}
We remark briefly on the relationship to modular operads \cite{getzler-kapranov-modular}, postponing details to a sequel.  The cellular chain complex of the moduli space $\Delta_{g,n}$ is isomorphic to the Feynman transform $F\mathsf{ModCom}((g,n))$, where $\mathsf{ModCom}$ is the modular-commutative operad $\mathsf{ModCom}((g,n))=\Q$ in degree 0 for each $(g,n)$ with $2g-2+n>0$. In fact $F\mathsf{ModCom}((g,n))$ is quasi-isomorphic to $F\mathsf{Com}((g,n))$ whenever $g>0$ and $(g,n)\ne (1,1)$; see \cite[Remark 3.3]{cgp-marked}.  Here, $\mathsf{Com}$ is the commutative operad $\mathsf{Com}((g,n)) = \Q$ in degree $0$ for each $g=0$ and $n\ge 3$,  and $0$ otherwise.  In light of our Theorem \ref{thm: red cohom of Delta2n and red cohom of Conf}, our results give computations of the homology of $F\mathsf{ModCom}((2,n))$ and of $F\mathsf{Com}((2,n))$ in the range $n\le 22$. These have renewed interest in light of the recent results of \cite{CGP1, cgp-marked}.

\subsubsection{Future work}

A sequel to this paper shall present computations on genus $g>2$ graph complexes in relation to $\Delta_{g,n}$, via a Serre-like spectral sequence whose $E_1$ page involves the compactified configuration spaces of more than one graph of genus $g$.  In that paper we will also treat more precisely the connections between modular operads, cellular chains of $\Delta_{g,n}$, and hairy graph complexes that are sketched above.  It would be interesting to extend the computations in this paper to explore the other parities (of $n$ and $m$) of graph complexes.

\subsection*{Acknowledgements} We thank Eric Ramos for informing us of each others' work.  
We also thank Dan Petersen and Louis Hainaut for suggesting to us the connection between our configuration spaces and Hochschild homology, along with many other useful ideas.
We thank Victor Turchin, Ronno Das, Philip Tosteson, Orsola Tommasi, and Ben Ward for helpful conversations. Lastly, we thank ICERM and Brown University for generously providing us with the computing resources on which we ran our program.
C.B. was supported by NSF DMS-2204299; N.G. was supported by NSF Grant No. DMS-1902762; M.C. was supported by NSF DMS-1701924, CAREER DMS-1844768, and a Sloan Research Fellowship.

\section{Homology of compactified configuration spaces of graphs} \label{sec:homology of conf}

For a topological space $X$ and for $n\ge 0$, recall the configuration space \[\Conf_n(X) = \{ (x_1,\ldots,x_n)\in X^n \mid x_i\ne x_j \textrm{ for all } i\ne j\}.\] 
We refer to the one-point compactification, denoted $\Conf_n(X)^+$,   as the {\em compactified configuration space}. 
In this paper, we only consider compactified configuration spaces on compact Hausdorff spaces $X$. In this case, there is an $S_n$-equivariant homeomorphism of pointed spaces
\begin{equation}
    \Conf_n(X)^+ \cong {X^n}/{ \{ (x_1,\ldots,x_n)\in X^n \mid x_i=x_j \textrm{ for some } i\ne j\}}.
\end{equation}

\begin{prop} 
For each $n\ge 0$, $\Conf_n(-)^+$ is a functor from the category of compact Hausdorff topological spaces and \emph{all} continuous maps  
to the category of pointed topological spaces with $S_n$-action.  
Moreover, if $f,g\col X\to Y$ are  
homotopic, then the induced maps \[\Conf_n(X)^+\to \Conf_n(Y)^+\] are again homotopic.
\label{prop: compactified conf_n is homotopy invariant}
\end{prop}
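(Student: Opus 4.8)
The plan is to work entirely with the quotient model recorded just above the proposition: for compact Hausdorff $X$ there is an $S_n$-equivariant homeomorphism $\Conf_n(X)^+ \cong X^n/D_n(X)$, where I write $D_n(X) = \{(x_1,\dots,x_n)\in X^n : x_i = x_j \text{ for some } i\ne j\}$ for the fat diagonal, which is collapsed to the basepoint. Since $X$ is Hausdorff, $D_n(X)$ is closed (it is a finite union of preimages of diagonals under coordinate projections), so the quotient is again compact Hausdorff. The conceptual point, which explains why we get \emph{all} continuous maps rather than only injections, is that an arbitrary continuous map need not restrict to the open locus $\Conf_n(X)$ but always respects the fat diagonal.

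For functoriality, given a continuous map $f\col X\to Y$ I would form the product map $f^n\col X^n\to Y^n$, $(x_1,\dots,x_n)\mapsto (f(x_1),\dots,f(x_n))$. If $x_i=x_j$ then $f(x_i)=f(x_j)$, so $f^n(D_n(X))\subseteq D_n(Y)$; hence the composite $q_Y\circ f^n$ is constant (equal to the basepoint) on $D_n(X)$, and the universal property of the quotient topology produces a unique continuous map $\Conf_n(f)^+\col X^n/D_n(X)\to Y^n/D_n(Y)$ making the evident square commute. The identities $\Conf_n(\id)^+=\id$ and $\Conf_n(g\circ f)^+=\Conf_n(g)^+\circ\Conf_n(f)^+$ then follow from the corresponding properties of $(-)^n$ together with the uniqueness clause of the universal property. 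Since $S_n$ acts on $X^n$ by permuting coordinates, preserving $D_n(X)$, and $f^n$ is visibly $S_n$-equivariant, the induced map is $S_n$-equivariant and sends basepoint to basepoint by construction. This establishes the functor.

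For homotopy invariance, let $H\col X\times[0,1]\to Y$ be a homotopy from $f$ to $g$, and define $H_n\col X^n\times[0,1]\to Y^n$ by applying $H(-,t)$ in each coordinate. Exactly as above, for each fixed $t$ the map $H_n(-,t)$ carries $D_n(X)$ into $D_n(Y)$, so the composite $q_Y\circ H_n\col X^n\times[0,1]\to Y^n/D_n(Y)$ is constant on every slice $D_n(X)\times\{t\}$. I then want to descend this composite along $q_X\times\id_{[0,1]}$ to obtain a homotopy $\widetilde H\col (X^n/D_n(X))\times[0,1]\to Y^n/D_n(Y)$ interpolating $\Conf_n(f)^+$ and $\Conf_n(g)^+$, using $H(-,0)=f$ and $H(-,1)=g$.

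The one genuine subtlety, and the step I expect to be the main obstacle, is that a product of quotient maps need not be a quotient map, so it is not automatic that $(X^n/D_n(X))\times[0,1]$ carries the quotient topology determined by $q_X\times\id_{[0,1]}$. The remedy is the standard fact that $q_X\times\id_Z$ is a quotient map whenever $Z$ is locally compact Hausdorff; since $[0,1]$ is compact Hausdorff this hypothesis is satisfied. Granting this, $q_X\times\id_{[0,1]}$ is a quotient map whose fibers are precisely the singletons of $\Conf_n(X)\times[0,1]$ together with the slices $D_n(X)\times\{t\}$, on each of which $q_Y\circ H_n$ is constant; the universal property then yields the desired continuous $\widetilde H$, completing the proof.
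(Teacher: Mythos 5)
Your proof is correct and follows essentially the same route as the paper's: form $f^n\colon X^n\to Y^n$, observe that it carries the fat diagonal into the fat diagonal, and descend to the one-point compactifications viewed as quotients $X^n/D_n(X)$. The only real difference is that you explicitly justify descending the homotopy through $(X^n/D_n(X))\times[0,1]$ via the locally-compact-times-quotient lemma --- a point the paper passes over in silence, and which in this compact Hausdorff setting also follows from noting that $q_X\times\id$ is a closed surjection onto a Hausdorff space.
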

\noindent Thus, in contrast to the situation for uncompactified configuration spaces, for a compact $X$ (such as a finite graph), the homotopy type of $\Conf_n(X)^+$ depends only on the homotopy type of $X$, and a self homotopy equivalence of $X$ induces an $S_n$-equivariant self homotopy equivalence of  $\Conf_n(X)^+$.
In fact, a more general version of Proposition~\ref{prop: compactified conf_n is homotopy invariant} is true: for (not necessarily compact) Hausdorff spaces, the functor $\Conf_n(-)^+$ takes a \emph{proper} homotopy equivalence to a proper homotopy equivalence.
\begin{proof}[Proof of Proposition~\ref{prop: compactified conf_n is homotopy invariant}]
Let $X$ and $Y$ be compact Hausdorff, and let $f\col X\to Y$ be a continuous map.  
Write $f^n\col X^n\to Y^n$ for the induced map of Cartesian powers.
The source and target spaces contain copies of $\Conf_n(X)$ and $\Conf_n(Y)$, respectively, and the preimage of $\Conf_n(Y)$ is contained in $\Conf_n(X)$. Therefore, collapsing the complements of $\Conf_n(X)$ and $\Conf_n(Y)$  yields the desired pointed $S_n$-equivariant map 
 $$\Conf_n(X)^+ \to \Conf_n(Y)^+.$$  
 
 Moreover, if $F\col X\times [0,1]\to Y$ is a homotopy  
 between $f$ and $g$, then in the same manner we obtain an $S_n$-equivariant map $X^n\times [0,1]\to Y^n$, and an $S_n$-equivariant homotopy 
 \[\Conf_n(X)^+\times [0,1]\to \Conf_n(Y)^+\]
 between the maps induced by $f$ and $g$.
\end{proof}

Homotopy invariance of $\Conf_n(-)^+$ in particular gives well-defined and natural actions of the groups of homotopy automorphisms, as the following proposition explains.

\begin{prop}\label{prop:iso(G) acting through Out}Let $X$ and $Y$ be compact Hausdorff, and let 
$\hAut(X)$ and $\hAut(Y)$
be their respective groups of homotopy classes of self-homotopy equivalences. Let $X\xrightarrow{\ \sim\ }Y$ be a homotopy equivalence, and let 
$\phi\col \hAut(X) \to \hAut(Y)$
be the induced group homomorphism.  Then the induced isomorphism of graded $S_n$-representations
\[
H_*(\Conf_n(X)^+)\xrightarrow{\ \sim\ } H_*(\Conf_n(Y)^+)
\]
is $\hAut(X)$-equivariant, where $\hAut(X)$ acts on the right-hand side through $\phi$.

\end{prop}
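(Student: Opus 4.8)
The plan is to reduce the statement to a formal argument built on the two properties of $\Conf_n(-)^+$ established in Proposition~\ref{prop: compactified conf_n is homotopy invariant}: that it is a functor on \emph{all} continuous maps, and that it carries homotopic maps to homotopic maps, hence to equal maps on homology. First I would fix a homotopy equivalence $h\col X\to Y$ representing the given equivalence, together with a homotopy inverse $g\col Y\to X$, so that $g\circ h\simeq \id_X$ and $h\circ g\simeq \id_Y$. Applying the functor $\Conf_n(-)^+$ and passing to homology, homotopy invariance shows that $h_* := (\Conf_n(h)^+)_*$ and $g_* := (\Conf_n(g)^+)_*$ are mutually inverse isomorphisms between $H_*(\Conf_n(X)^+)$ and $H_*(\Conf_n(Y)^+)$; this is precisely the isomorphism whose equivariance is to be verified.

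Next I would make the two actions and the homomorphism $\phi$ explicit. The action of $\hAut(X)$ on $H_*(\Conf_n(X)^+)$ sends a class $[\alpha]$ to $\alpha_* := (\Conf_n(\alpha)^+)_*$; this is well defined on homotopy classes exactly because homotopic self-equivalences induce equal maps on homology (Proposition~\ref{prop: compactified conf_n is homotopy invariant}), and it is a group action by functoriality. The homomorphism $\phi$ is conjugation by $h$: on homotopy classes $\phi([\alpha]) = [h\circ\alpha\circ g]$, which is well defined and multiplicative because $g$ is a homotopy inverse of $h$ and composition respects homotopy. The assertion that $\hAut(X)$ acts on the right-hand side ``through $\phi$'' then means $[\alpha]$ acts on $H_*(\Conf_n(Y)^+)$ by $\phi([\alpha])_* = (\Conf_n(h\circ\alpha\circ g)^+)_*$.

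The heart of the argument is then a single computation on homology. By functoriality of $\Conf_n(-)^+$ followed by homotopy invariance, $\phi([\alpha])_* = h_*\circ\alpha_*\circ g_*$. Composing on the right with $h_*$ and using $g_*\circ h_* = \id$ (the homology incarnation of $g\circ h\simeq\id_X$) gives $\phi([\alpha])_*\circ h_* = h_*\circ\alpha_*$, which is exactly the commutativity expressing $\hAut(X)$-equivariance of $h_*$. Finally, $S_n$-equivariance is automatic: every map produced by the functor $\Conf_n(-)^+$ is $S_n$-equivariant by Proposition~\ref{prop: compactified conf_n is homotopy invariant}, so $h_*$ and all the maps above are maps of $S_n$-representations, and the equivariance just established upgrades $h_*$ to an isomorphism of $S_n$-representations intertwining the two $\hAut(X)$-actions.

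The main obstacle, and really the only place anything substantive enters, is the passage from homotopy to homology: the identities above do not hold at the level of spaces, nor even as equalities of maps on $\Conf_n(-)^+$, but only after applying $H_*$, precisely because Proposition~\ref{prop: compactified conf_n is homotopy invariant} guarantees that homotopic input maps yield homologically equal output maps. Beyond invoking this, the argument is purely formal, and I would take care only to check at the outset that $\phi$ is well defined and that $g_*$ and $h_*$ are inverse isomorphisms before carrying out the final diagram chase.
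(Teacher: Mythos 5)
Your proposal is correct and follows essentially the same route as the paper: define $\phi$ as conjugation by the chosen homotopy equivalence, verify it is a well-defined homomorphism using the homotopy inverse, and then deduce equivariance from the functoriality and homotopy invariance of $\Conf_n(-)^+$ established in Proposition~\ref{prop: compactified conf_n is homotopy invariant}. You merely spell out the final compatibility computation ($\phi([\alpha])_*\circ h_* = h_*\circ\alpha_*$) more explicitly than the paper does.
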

\begin{proof}

Let $m\colon X\xrightarrow{\ \sim\ }Y$ and $m'\colon Y\xrightarrow{\ \sim\ } X$ be inverse homotopy equivalences. Then any $f\in \hAut(X)$ determines an auto-equivalence $mfm': Y\xrightarrow{\ \sim\ } Y$, and therefore an element of $\hAut(Y)$, and this association descends to a well-defined map $\hAut(X)\to \hAut(Y)$.
Given another $g\in \hAut(X)$, the composition $(mfm')(mgm') = mf(m'm)gm'$ is homotopic to $mfgm'$, yielding that $\phi$ is a homomorphism.
Functoriality and homotopy invariance of $\Conf_n(-)^+$ gives the compatibility of the two actions.
\end{proof}

Specializing to finite graphs, the above facts show that a calculation of $H_*(\Conf_n(G)^+)$ for just one graph $G$ along with the induced action of $\hAut(G)$ determines the analogous representations for all other homotopy equivalent graphs. One may then work with the simplest graph of a given genus, as we do next.

\subsection{An $S_n$-equivariant cell structure on $\Conf_n(R_g)^+$}

Let $R_g=\vee_{i=1}^g S^1$ be the ``rose graph:'' a wedge of $g$ circles, with the unique vertex denoted $v$.  
For any finite, connected graph  $G$ of genus (first Betti number) $g=|E(G)|-|V(G)|+1$,
we may use Proposition \ref{prop: compactified conf_n is homotopy invariant} above to compute $\widetilde{H}_*(\Conf_n(G)^+)$ with its natural action of $\iso(G)$, via computing $\widetilde{H}_*(\Conf_n(R_g)^+)$ with its natural $\hAut(R_g)$-action.

We now fix a cellular structure on $\Conf_n(R_g)^+$, from which we obtain a 2-step free resolution for $\widetilde{H}_*(\Conf_n(R_g)^+;\Z)$ as an (integral) $S_n$-representation used in this paper.\footnote{We've learned though private communication that O. Tommasi, D. Petersen and P. Tosteson have independently found the same construction for this calculation.  Petersen and Tommasi have also obtained results on the weight-0 compactly supported cohomology of $\mathcal{M}_{2,n}$, also using graph calculations.  At this moment, we do not know how to directly relate their methods with the ones presented in this paper.}

Let $\Xi_g = \bigcup_{i=1}^g (i-1,i)\subset\R$ be a union of $g$ open intervals, and fix a  homeomorphism to $R_g\setminus\{v\}$. We sometimes call the intervals arcs since they correspond to the arcs of the petals in $R_g$ after removing the central vertex. Denote $[n]=\{1,2,\dots,n\}$, and for $S\subseteq[n]$ let $\Conf_S(\Xi_g)$ be the space of configurations of points in $\Xi_g$ with labels in $S$. 
Then $\Conf_S(\Xi_g)$ decomposes as a disjoint union of open polyhedra as follows. Let $|S|=k$; then for every pair $(\sigma, \chi)$, where 
$\sigma: [k]\xrightarrow{\ \cong\ } S$ is a total ordering on $S$ and $\chi\colon [k]\to [g]$ a nondecreasing function, we associate the collection of configurations 
$(x_s)_{s\in S}\in \Conf_S(\Xi_g)$, 
where
\[
x_{\sigma_a} < x_{\sigma_b} \in \R \iff a < b \in [k] \quad \text{ and }  \quad x_{\sigma_a}\in (i-1,i) \iff \chi(a) = i.
\]
Writing $\sigma_i :=\sigma(i)$ for short, we denote this collection of configurations by
\begin{equation}\label{eq:cellnames}
(\sigma_1\sigma_2\ldots\sigma_{j_1}|\sigma_{j_1+1}\ldots \sigma_{j_2}| \ldots | \ldots | \ldots  \sigma_{j_{g-1}}| \sigma_{j_{g-1}+1}\ldots \sigma_k).
\end{equation}
where $\chi^{-1}(1) = \{1,\ldots,j_1\},$ $\chi^{-1}(2) = \{j_1+1,\ldots,j_2\},$
and so on. Set $j_0 = 1$ and $j_g = k$.

\begin{exmp}\label{ex:|123|4|56|}
For $n=6$ and $g=3$, $(413|5|62)$ denotes the collection of configurations of points $(x_1,x_2,\dots,x_6)\in\R^6$ with $0<x_4<x_1<x_3<1<x_5<2<x_6<x_2<3$. One such configuration can be pictured as:
\begin{center}
\begin{tikzpicture}[scale=1.3]
\foreach \x in {0,1,2,3} {
\node[draw,circle,fill=none,minimum size=4pt,inner sep=0pt] (\x) at (2*\x,0) {};
};
\draw[thick,-] (0)--(1);
\draw[thick,-] (1)--(2);
\draw[thick,-] (2)--(3);
\foreach \x in {0.5,1,1.5,3,4.7,5.3} {
\node[draw,circle,fill=black!50,minimum size=4pt,inner sep=0pt]
at (\x,0) {};
};
\node at (0.5,.2) {\scriptsize 4};
\node at (1,.2) {\scriptsize 1};
\node at (1.5,.2) {\scriptsize 3};
\node at (3,.2) {\scriptsize 5};
\node at (4.7,.2) {\scriptsize 6};
\node at (5.3,.2) {\scriptsize 2};
\end{tikzpicture}
\end{center}

Intervals may be vacant, as in the case of $(321||654)$, which contains configurations without points on the second interval, such as:
\begin{center}
\begin{tikzpicture}[scale=1.3]
\foreach \x in {0,1,2,3} {
\node[draw,circle,fill=none,minimum size=4pt,inner sep=0pt] (\x) at (2*\x,0) {};
};
\draw[thick,-] (0)--(1);
\draw[thick,-] (1)--(2);
\draw[thick,-] (2)--(3);
\foreach \x in {0.5,1,1.5,4.5,5,5.5} {
\node[draw,circle,fill=black!50,minimum size=4pt,inner sep=0pt]
at (\x,0) {};
};
\node at (0.5,.2) {\scriptsize 3};
\node at (1,.2) {\scriptsize 2};
\node at (1.5,.2) {\scriptsize 1};
\node at (4.5,.2) {\scriptsize 6};
\node at (5,.2) {\scriptsize 5};
\node at (5.5,.2) {\scriptsize 4};
\end{tikzpicture}
\end{center}
\end{exmp}

The configurations corresponding to  each $(\sigma,\chi)$ are parametrized by the interior of a 
product of open simplices.   As the following lemma states, this determines a cellular decomposition of $\Conf_n(R_g)^+$. Figure \ref{fig:n=2} illustrates this decomposition in the case $n=g=2$, omitting the point $\infty$.

\begin{figure}[ht]
\begin{tikzpicture}[scale=2]
\draw[dashed,-] (0.1,0.05) -- (0.95,.05) -- (0.95,0.9)-- (0.1,0.05);
\draw[dashed,-] (0.05,0.1) -- (0.05,0.95) -- (0.9,0.95) -- (0.05,0.1);
\draw[dashed,-] (1.1,1.05) -- (1.95,1.05) -- (1.95,1.9)-- (1.1,1.05);
\draw[dashed,-] (1.05,1.1) -- (1.05,1.95) -- (1.9,1.95) -- (1.05,1.1);
\draw[dashed,-] (1.05,0.05)--(1.05,0.95)--(1.95,0.95)--(1.95,0.05)--(1.05,0.05);
\draw[dashed,-] (0.05,1.05)--(0.05,1.95)--(0.95,1.95)--(0.95,1.05)--(0.05,1.05);
\foreach \x in {0,1,2} {
\draw[thick,-,blue] (\x,0.05)--(\x,0.95);
\draw[thick,-,OliveGreen] (\x,1.05)--(\x,1.95);
\draw[thick,-,red] (0.05,\x)--(0.95,\x);
\draw[thick,-,Purple] (1.05,\x)--(1.95,\x);
};
\node at (1.6,0.6) {\scriptsize $(2|1)$};
\node at (0.4,1.6) {\scriptsize $(1|2)$};
\node at (0.3,0.7) {\scriptsize $(12|)$};
\node at (0.7,0.3) {\scriptsize $(21|)$};
\node at (1.3,1.7) {\scriptsize $(|12)$};
\node at (1.7,1.3) {\scriptsize $(|21)$};
\node[blue] (a) at (0.7,-.3) {\scriptsize $(2|)$};
\node[red] (b) at (-.3,0.7) {\scriptsize $(1|)$};
\node[OliveGreen] (c) at (0.7,2.3) {\scriptsize $(|2)$};
\node[Purple] (d) at (2.3,0.7) {\scriptsize $(|1)$};
\draw (a.east) edge[bend right,->] (1,-0.1);
\draw (b.north) edge[bend left,->] (-0.1,1);
\draw (c.east) edge[bend left,->] (1,2.1);
\draw (d.north) edge[bend right,->] (2.1,1);
\end{tikzpicture}
\caption{Cellular decomposition of $\Conf_2(R_2)^+$, omitting $\infty$. 
The symmetric group $S_2$ acts on this picture via reflection across the diagonal.}
\label{fig:n=2}
\end{figure}
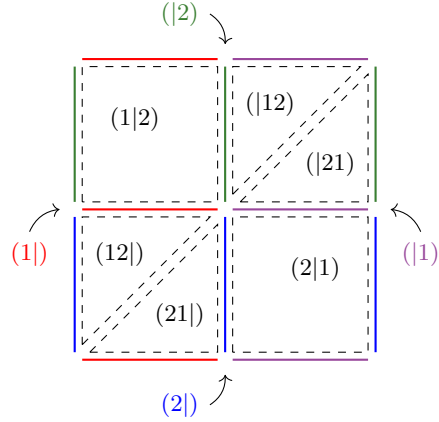

\begin{lem} \label{lem:resolution}
The space $\Conf_n(R_g)^+$ admits a cellular decomposition with a single $0$-cell,
$n!\cdot\binom{n+g-1}{g-1}$
cells in dimensions $n$, and 
$n!\cdot\binom{n+g-2}{n-1}$
cells in dimension $n-1$.  For $k\in \{n-1, n\}$, the $k$-dimensional cells are labelled by total orderings of $\{1,\ldots,k\}$, separated by $g-1$ bars, as denoted in \eqref{eq:cellnames}.  The natural $S_n$-action freely permutes the $(n-1)$- and $n$-cells.

Consequently, we have a chain complex of free $\Z[S_n]$-modules
\begin{equation}\label{eq:2-step resolution}
    \Z[S_n]^{\binom{n+g-1}{g-1}} \overset{\partial}{\to} \Z[S_n]^{\binom{n+g-2}{g-1}},
\end{equation}
where the modules are placed in degrees $n$ and $n-1$ respectively,  
whose homology is isomorphic, $S_n$-equivariantly, to the reduced homology $\widetilde{H}_*(\Conf_n(R_g)^+)$.
\end{lem}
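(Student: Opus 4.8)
The plan is to exhibit the claimed cells as the strata of $\Conf_n(R_g)^+$ obtained by recording, for each configuration, which of the $n$ points (if any) sits at the central vertex $v$ and how the remaining points are distributed among the open arcs. The key initial observation is that, since the points of a configuration are distinct, at most one of them can lie at $v$; this is precisely what forces cells to occur only in dimensions $n$ and $n-1$, together with the basepoint $\infty$. First I would stratify. The locus where no point is at $v$ is $\Conf_n(\Xi_g)$, whose connected components are exactly the loci described by a pair $(\sigma,\chi)$ as in \eqref{eq:cellnames}: the total order of the points along $\R$ recovers $\sigma$, and the positions of the interval boundaries $1,\dots,g-1$ within that order recover the nondecreasing $\chi$. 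Each component is parametrized by the interior of the product of open simplices $\prod_{i=1}^g \Delta^{m_i}$ with $m_i=|\chi^{-1}(i)|$, hence is an open ball of dimension $\sum_i m_i = n$. The locus where exactly one point, say the one labeled $\ell$, is at $v$ is a disjoint union over $\ell$ of copies of $\Conf_{[n]\setminus\{\ell\}}(\Xi_g)$, whose components are open balls of dimension $n-1$. Everything else -- a collision of two points, or two points arriving at $v$ -- lies in the fat diagonal and is collapsed to $\infty$.

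Next I would promote this stratification to a genuine CW structure by producing characteristic maps. For an $n$-cell the domain is the compact polytope $\prod_i \overline{\Delta}^{m_i}$, and the characteristic map is the composite $\prod_i \overline{\Delta}^{m_i}\to R_g^n \to \Conf_n(R_g)^+$, where the first map places the points at the prescribed positions, reading each arc coordinate through the continuous parametrization $[\,i-1,i\,]\to R_g$ that sends both endpoints to $v$, and the second is the quotient collapsing the fat diagonal. This composite is continuous, restricts to a homeomorphism on the open cell, and on the boundary of the polytope either sends a single extremal point to $v$ -- landing in the appropriate $(n-1)$-cell -- or records a collision or a second arrival at $v$, landing at $\infty$. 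The analogous maps for the $(n-1)$-cells have their entire boundary collapsed to $\infty$. Since there are finitely many cells and $\Conf_n(R_g)^+$ is compact Hausdorff, the assembled characteristic maps give a continuous bijection from a finite CW complex onto $\Conf_n(R_g)^+$, hence a homeomorphism; this yields closure-finiteness and the weak topology for free.

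With the cell structure in hand the combinatorics and the $S_n$-action are routine. Counting the $n$-cells amounts to choosing a total order on $[n]$ ($n!$ ways) together with a weak composition of $n$ into $g$ parts recording the block sizes ($\binom{n+g-1}{g-1}$ ways, by stars and bars); the $(n-1)$-cells require in addition a choice of the vertex label, giving $n\cdot(n-1)!\cdot\binom{n+g-2}{g-1}=n!\binom{n+g-2}{g-1}$. The symmetric group acts by relabeling points, and this action is free in both dimensions, since a permutation fixing a cell must fix its ordered, distinctly labeled data -- including the vertex label, in the $(n-1)$-dimensional case -- and hence be the identity. The orbits are indexed precisely by the block-size data, of which there are $\binom{n+g-1}{g-1}$ and $\binom{n+g-2}{g-1}$, respectively. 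Thus the cellular chain modules in degrees $n$ and $n-1$ are free $\Z[S_n]$-modules of these ranks, while all other positive-degree chain modules vanish. Passing to reduced cellular chains removes the single $0$-cell $\infty$, leaving exactly the two-term complex \eqref{eq:2-step resolution}, whose homology computes $\widetilde H_*(\Conf_n(R_g)^+)$ by cellular homology; the boundary map is $S_n$-equivariant by naturality of the action on cells.

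I expect the main obstacle to be the second step: verifying that the naive combinatorial boundary is the correct topological one, i.e.\ that the characteristic maps are continuous and that the closure of each open stratum meets only the claimed lower cells. The delicate point is that collapsing the fat diagonal must interact correctly with the polyhedral faces -- faces corresponding to collisions must all map to the single point $\infty$, while faces corresponding to a lone point reaching the shared vertex must glue onto the $(n-1)$-cells -- and one must confirm this is compatible across the several arcs meeting at $v$. Once continuity of the arc parametrization $[\,i-1,i\,]\to R_g$ and of the quotient map is invoked, the compact-to-Hausdorff argument closes the gap with no further point-set subtleties.
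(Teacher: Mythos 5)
Your proposal is correct and follows essentially the same route as the paper: stratify $\Conf_n(R_g)^+$ by whether the vertex $v$ is inhabited, identify the two strata with $\Conf_n(\Xi_g)$ and $\coprod_{|S|=n-1}\Conf_S(\Xi_g)$, and observe that their components are open polyhedra freely permuted by $S_n$. The only difference is that you spell out the characteristic maps and the compact-to-Hausdorff verification of the CW structure, which the paper leaves implicit.
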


\begin{proof}
Let $X^\bullet \subset \Conf_n(R_g)$ denote the closed subset of all configurations in which the vertex $v$ is inhabited, and let $X^{\circ} \subset \Conf_n(R_g)$ be its complement, parametrizing all configurations in which $v$ is uninhabited.
A choice of  homeomorphism $\Xi_g \cong R_g\setminus\{v\}$ yields an $S_n$-equivariant homeomorphism
\[
X^\circ \cong \Conf_n(\Xi_g).\]
Similarly, we obtain an $S_n$- 
equivariant homeomorphism
\[
 X^\bullet \cong \coprod_{|S|=n-1}\Conf_S(\Xi_g),
\]
where a configuration $(x_1,\ldots,x_n)\in X^\bullet$ in which $x_i=v$ determines a configuration in  
$\Conf_{[n]\setminus\{i\}}(\Xi_g)$,  and vice versa.

Following the discussion preceding the lemma statement, $X^\circ$ and $X^\bullet$ are disjoint unions of interiors of convex polyhedra in $\R^S$ for $|S|=n-1$ and $|S|=n$, each indexed by a pair $(\sigma,\chi)$. In this way we obtain the claimed cell structure on $\Conf_n(R_g)^+$. Now notice that $S_n$ acts freely on the $n$-cells and the $(n-1)$-cells, respectively.  
Therefore, the reduced cellular chain complex is quasi-isomorphic to the claimed $2$-step complex of free $S_n$-modules, and computes $\widetilde{H}_*(\Conf_n(R_g)^+;\Z)$ equivariantly with respect to $S_n$.
\end{proof}

As an immediate corollary, we have the following formula for the $S_n$-equivariant Euler characteristic of $\Conf_n(G)^+$ for any graph $G$.
\begin{cor}
Fix $g\geq 1$ and 
$n\geq 1$.  For any connected graph $G$ with first Betti number $g$, the $S_n$-equivariant Euler characteristic of $\Conf_n(G)^+$ in the representation ring of $S_n$ is
\begin{equation}
    (-1)^{n}\binom{n+g-2}{g-2}[\Z[S_n]].
\end{equation}
\end{cor}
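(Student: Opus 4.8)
The plan is to reduce to the rose graph $R_g$ and then read the Euler characteristic directly off the two-step free resolution of Lemma~\ref{lem:resolution}, turning the proof into a one-line binomial identity. Since $G$ is connected of first Betti number $g$, it is homotopy equivalent to $R_g$, and by Proposition~\ref{prop: compactified conf_n is homotopy invariant} the induced map $\Conf_n(G)^+\to\Conf_n(R_g)^+$ is an $S_n$-equivariant homotopy equivalence (the $S_n$-action merely permutes labels and is preserved by any map of the underlying spaces). The $S_n$-equivariant Euler characteristic, being the alternating sum $\sum_i(-1)^i[\widetilde H_i(-;\Q)]$ in the representation ring of $S_n$, depends only on the $S_n$-equivariant homotopy type, so it suffices to compute it for $\Conf_n(R_g)^+$.

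Next I would apply the equivariant Euler--Poincar\'e principle to the complex~\eqref{eq:2-step resolution}. Lemma~\ref{lem:resolution} presents $\widetilde H_*(\Conf_n(R_g)^+)$ as the homology of a two-step complex of free $\Z[S_n]$-modules concentrated in degrees $n$ and $n-1$, of ranks $\binom{n+g-1}{g-1}$ and $\binom{n+g-2}{g-1}$ respectively. Because the alternating sum of the homology classes of a bounded complex equals the alternating sum of the classes of its terms, we get
\[
\sum_i(-1)^i[\widetilde H_i(\Conf_n(R_g)^+;\Q)]
=(-1)^n\Bigl(\binom{n+g-1}{g-1}-\binom{n+g-2}{g-1}\Bigr)[\Z[S_n]].
\]

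The last step is to simplify the coefficient using Pascal's rule $\binom{n+g-1}{g-1}=\binom{n+g-2}{g-1}+\binom{n+g-2}{g-2}$, which collapses the difference to $\binom{n+g-2}{g-2}$ and yields the claimed value $(-1)^n\binom{n+g-2}{g-2}[\Z[S_n]]$; for $g=1$ this reads $\binom{n-1}{-1}=0$, consistent with the two free modules having equal rank. I do not anticipate any genuine obstacle here: the only point requiring care is the bookkeeping convention. The relevant Euler characteristic must be the reduced one --- equivalently, the compactly supported equivariant Euler characteristic of $\Conf_n(G)$ --- so that the single $0$-cell, the $S_n$-fixed basepoint $\infty$, contributes nothing; the unreduced version would add the class of the trivial representation to the answer.
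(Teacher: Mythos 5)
Your proof is correct and is exactly the argument the paper has in mind: the paper presents this as an ``immediate corollary'' of Lemma~\ref{lem:resolution}, i.e.\ reduce to $R_g$ by homotopy invariance and take the alternating sum of the ranks of the two free $\Z[S_n]$-modules, with Pascal's rule giving $\binom{n+g-1}{g-1}-\binom{n+g-2}{g-1}=\binom{n+g-2}{g-2}$. Your remark that the Euler characteristic must be taken in the reduced sense (so the basepoint $0$-cell contributes nothing) is the right bookkeeping point and is consistent with the $g=1$ sanity check.
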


\subsection{Explicit description of the 2-step complex}

In order to implement the 2-step complex that arises in~\eqref{eq:2-step resolution} in computer calculations,
we first explicitly orient the cells in the cellular decomposition of $\Conf_n(R_g)^+$. The open cells of $\Conf_n(\Xi_g) \subset \R^n$ are open subsets of $\R^n$, and inherit their orientation from the standard orientation of $\R^n$.  
For a set $S = [n]\setminus \{j\}$, first orient $\R^S$ so that 
the ordered basis $(e_1,\ldots,\widehat{e_j},\ldots,e_n)$  has sign $(-1)^{j-1}$.
Then orient the open cells of $\Conf_S(\Xi_g) \subset \R^S$ by restriction. This choice ensures that transpositions in $S_n$ always act by reversing orientation.

Then a permutation $\tau\in S_n$ sends the configuration $(x_s)_{s\in S}$ to the configuration $(x_{\tau^{-1}(t)})_{t\in \tau(S)}$. Label cells by pairs $(\sigma,\chi)$ as before; $\tau$ permutes cells according to \[(\sigma,\chi)\mapsto \sgn(\tau)(\tau^{-1}\circ \sigma, \chi),\] where the sign indicates orientation reversal. 

The set $\{(\id,\chi)\, | \, \chi:[k]\to [g] \text{ nondecreasing },k=n-1,n \}$ forms a set of representatives of $S_n$-orbits of cells. They give an equivariant isomorphism of the cellular chain complex $\Z[S_n]^{\binom{k+g-1}{g-1}} \xrightarrow{\ \sim\ } C_k^{\operatorname{CW}}$.
Explicitly, the action of $\sigma\in S_n$ on a representative is given by
\begin{equation*}
    \sigma\cdot (12\ldots j_1| \ldots | j_{g-1}+1 \ldots n) = \sgn(\sigma)(\sigma^{-1}_1 \sigma^{-1}_2 \ldots \sigma^{-1}_{j_1}| \ldots | \sigma^{-1}_{j_{g-1}+1}\ldots \sigma^{-1}_n),
\end{equation*}
hence gives rise to the identification between cells and permutations
\begin{equation} \label{eq:identifying cells with permutations}
    (\sigma_1\ldots\sigma_{j_1} | \sigma_{j_1+1}\ldots \sigma_{j_2}| \ldots | \ldots | \sigma_{j_{g-1}+1}\ldots \sigma_n) \longleftrightarrow \sgn(\sigma)\sigma^{-1} \in \Z[S_n]
\end{equation}
in the appropriate summand. 

Next, to describe the boundary operator explicitly, consider an open cell of $\Conf_n(\Xi_g)\into \Conf_n(R_g)$. As mentioned above, this is the interior of a polytope, and its boundary is a sum of open cells in $\coprod_{|S|=n-1}\Conf_S(\Xi_g)$.

\begin{lem} \label{lem:boundary}
The boundary operator on cells is given by 
\begin{equation}\label{eq:differential}
    \partial (\sigma_{1}\ldots |  \ldots | \ldots \sigma_{n}) = \sum_{i=1}^g (\ldots|\widehat{\sigma_{j_{i-1}+1}}\ldots \sigma_{j_i}|\ldots) -(\ldots|\sigma_{j_{i-1}+1}\ldots \widehat{\sigma_{j_i}}|\ldots).
\end{equation}
\end{lem}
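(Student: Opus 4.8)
The plan is to compute the cellular boundary of a single top cell directly from the geometry of the polytope parametrizing it, and then deduce the general formula from $S_n$-equivariance. Since the differential on the free $\Z[S_n]$-complex is $S_n$-equivariant and the asserted formula \eqref{eq:differential} is itself equivariant under relabeling (removing the extremal elements of a block commutes with the action described in \eqref{eq:identifying cells with permutations}), it suffices to verify it on the orbit representatives $(\id,\chi)$, i.e. on a cell of the form $(12\ldots j_1|\ldots|j_{g-1}+1\ldots n)$, and read off the signs.

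First I would use that this cell is the interior of a product of simplices $\Delta_1\times\cdots\times\Delta_g$, where $\Delta_i=\{\,i-1<x_{j_{i-1}+1}<\cdots<x_{j_i}<i\,\}$ is the open $m_i$-simplex ($m_i=j_i-j_{i-1}$) recording the points on the $i$-th arc. The topological boundary of $\Delta_i$ has two kinds of facets: the two \emph{endpoint} facets $\{x_{j_{i-1}+1}=i-1\}$ and $\{x_{j_i}=i\}$, on which the extremal point of the block reaches the vertex $v$; and the \emph{collision} facets $\{x_a=x_{a+1}\}$, on which two labelled points coincide. The crucial point is that a collision facet lies in the fat diagonal, so under the one-point compactification $\Conf_n(R_g)\into\Conf_n(R_g)^+$ it is sent to the basepoint, which is the unique $0$-cell; since $0\le n-2$ for $n\ge 2$, these facets vanish in the quotient $X^{(n-1)}/X^{(n-2)}$ and contribute nothing to $\partial\colon C_n\to C_{n-1}$. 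Only the endpoint facets survive, and they remove precisely the extremal points $\sigma_{j_{i-1}+1}$ and $\sigma_{j_i}$, producing exactly the two families of $(n-1)$-cells appearing in \eqref{eq:differential}.

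It then remains to determine the signs, which I would obtain by comparing the induced boundary orientation of each surviving facet against the declared orientation $\omega_S=(-1)^{j-1}\,dx_1\wedge\cdots\wedge\widehat{dx_j}\wedge\cdots\wedge dx_n$ of the target cell in $\R^S$, $S=[n]\setminus\{j\}$. At the left endpoint facet the boundary normal is $-e_j$ and at the right endpoint facet it is $+e_j$; contracting $dx_1\wedge\cdots\wedge dx_n$ against $e_j$ produces the sign $(-1)^{j-1}$, which is exactly cancelled by the $(-1)^{j-1}$ built into $\omega_S$. Hence the incidence sign depends only on the direction of the normal, i.e. only on whether the removed point is the leftmost or the rightmost of its block, and is independent both of the block index $i$ and of the label $j$ of the removed point. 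The declared conventions then fix the leftmost removal as $+$ and the rightmost as $-$, which is \eqref{eq:differential}. A convincing consistency check is a singleton block $m_i=1$: its unique point can slide to either adjacent copy of $v$, giving the same $(n-1)$-cell twice with opposite signs, so these must cancel---exactly as the endpoints cancel in $\partial[a,b]=b-a$; this both confirms that the two facets of a block carry opposite signs and shows the formula behaves correctly on empty and singleton blocks.

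The main obstacle is precisely this sign bookkeeping: reconciling the three orientation conventions in play---the standard orientation of $\R^n$ on the top cells, the twisted orientation $\omega_S$ on the $(n-1)$-cells, and the induced boundary orientation---so that the surviving facets acquire the \emph{uniform}, position-independent pattern of \eqref{eq:differential}. The delicate feature is that the contraction sign $(-1)^{j-1}$ (equivalently, the Leibniz signs $(-1)^{j_{i-1}}$ arising from differentiating the product $\Delta_1\times\cdots\times\Delta_g$ factor by factor) could a priori make the answer depend on where the block sits within the word; the purpose of the $(-1)^{j-1}$ normalization in $\omega_S$---the same normalization chosen so that transpositions reverse orientation---is exactly to cancel this dependence, leaving the clean formula with the same sign pattern in every block.
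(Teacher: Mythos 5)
Your proposal is correct and follows essentially the same route as the paper's (very terse) proof: the only codimension-one degenerations that survive in $\Conf_n(R_g)^+$ are those where an extremal point of a block reaches the vertex $v$, while all collision facets land on the basepoint and hence die in $C_n^{\operatorname{CW}}\to C_{n-1}^{\operatorname{CW}}$. In fact you go further than the paper, which explicitly omits the sign verification: your observation that the contraction sign $(-1)^{j-1}$ is exactly cancelled by the $(-1)^{j-1}$ built into the orientation of $\R^{[n]\setminus\{j\}}$, so that the incidence sign depends only on whether the removed point is leftmost or rightmost (and your singleton-block cancellation check), is the right argument; only the overall global sign of $\partial$ depends on one's boundary-orientation convention, which is immaterial for the homology.
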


\begin{proof}
The boundary operator on a top-dimensional cell indexed by $(\sigma_{1}\ldots |  \ldots | \ldots \sigma_{n})$ gives a signed sum of codimension $1$ cells that arise when one of the marked points on an edge of $R_g$ falls onto the vertex $v$.  All other collisions of points are identified with the $0$-cell $\infty$.  Thus~\eqref{eq:differential} follows, up to a verification of signs that we omit.
\end{proof}

\begin{exmp}
For $n=6$ and $g=3$, the cell $(123|4|56)$ 
has boundary given by 
\[
\partial (123|4|56) = (23|4|56) - (12|4|56) + \cancel{(123||56)} - \cancel{(123||56)} + (123|4|6) - (123|4|5).
\]
In particular, one observes that intervals that contain exactly one point do not contribute to the boundary. This is consistent with the observation that a point looping around a vacant edge in $R_g$ contributes no boundary.
\end{exmp}

\subsection{Action of homotopy equivalences $\Out(F_g)$} \label{subsec:Out action}

Let $\Out(F_g)$ denote the group of outer automorphisms of the free group on $g$ generators. Recall that 
$\Out(F_g) \cong \hAut(R_g).$
Therefore, by Proposition~\ref{prop: compactified conf_n is homotopy invariant}, there is an $\Out(F_g)$-action on the homology of $\Conf_n(R_g)^+$, which we describe here.

Fix generators $a_1,\dots,a_g$ for $F_g$. The group $\Out(F_g)$ is generated by the following automorphisms (see e.g. \cite{AFV2008}): 
{\em flips} $f_i$ for $i=1,\dots,g$; {\em swaps} $s_i$ for $i=1,\ldots,g-1$; and a {\em transvection} $t_{12}$, defined as follows: 
    \[f_i(a_j)=\begin{cases}
    a_i^{-1} & i=j\\
    a_j & i\neq j,
    \end{cases}\qquad \quad 
    s_i(a_j) = \begin{cases}
    a_{i+1} & j=i\\
    a_i & j=i+1\\
    a_j & j\neq i,i+1,
    \end{cases} \qquad
    t_{12}(a_j) = \begin{cases}
    a_1a_2 & j=1\\
    a_j & j\neq 1.
    \end{cases}
    \]

Note that $\Out(F_g)$ does not act on the space $R_g$, nor does it act on its cellular chains. Instead, the $\Out(F_g)$-action on homology is induced by a collection of continuous maps $R_g\to R_g$ that only satisfy the relations in $\Out(F_g)$ up to homotopy. Having picked generators $(\{f_i\},\{s_i\},t_{12})$, the $\Out(F_g)$-action is completely described by continuous realizations of these elements. In what follows, we denote such realizations and their operation on cellular chains by the corresponding uppercase letters $(\{F_i\},\{S_i\}, T_{12})$.

\begin{lem} \label{lem:out acting on chains} The actions of flips, swaps and transvections on homology can be realized by maps $R_g\to R_g$ that fix the vertex, and thus induces cellular maps on $\Conf_n(R_g)^+$. Their effect on cellular chains in the two nontrivial dimensions are given as follows.

The maps inducing flip and the swap permute the 
cells of $\Conf_n(R_g)^+$ as
\begin{align}
    &F_i:(\ldots|\textcolor{blue}{(j_{i-1}\!+\!1)} \textcolor{purple}{(j_{i-1}\!+\!2)}\ldots \textcolor{red}{j_i} | \ldots) \mapsto (-1)^{j_i-j_{i-1}}(\ldots|\textcolor{red}{j_i}\ldots\textcolor{purple}{(j_{i-1}\!+\!2)}\textcolor{blue}{(j_{i-1}\!+\!1)}| \ldots) \\
    & S_i: (\ldots|\textcolor{blue}{(j_{i-1}\!+\!1)\ldots, j_i} | \textcolor{red}{(j_{i}\!+\!1)\ldots j_{i+1} }| \ldots) \mapsto (\ldots|\textcolor{red}{(j_{i}\!+\!1)\ldots j_{i+1}} | \textcolor{blue}{(j_{i-1}\!+\!1)\ldots j_i }| \ldots).
\end{align}

The transvection $t_{12}$ is induced by the cellular operator
\begin{equation}\label{eq:transvection}
    T_{12}:(12\ldots j_1|\ldots j_2|j_2+1 \ldots | \ldots) \longmapsto \sum_{k=0}^{j_1} \sum_{\sigma\in \Psi_{k}}(12\ldots k| \sigma_{k+1}\ldots \sigma_{j_2}|j_2+1 \ldots| \ldots)
\end{equation}
where $\Psi_{k}$ is the set of shuffles of the ordered tuples $(k+1,\ldots, j_1)$ and $(j_1+1,\ldots ,j_2)$.
\end{lem}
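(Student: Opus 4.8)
The plan is to realize each of the chosen generators of $\Out(F_g)\cong\hAut(R_g)$ by an explicit continuous self-map of $R_g$ fixing the vertex $v$, to check that the map induces the stated automorphism on $\pi_1(R_g,v)=F_g$, and then to read off the induced operator on cellular chains directly from the geometry. Fixing $v$ guarantees that each realizing map is cellular: it carries the $(n-1)$-skeleton $\{\infty\}\cup X^\bullet$ into itself, since a configuration with a point at $v$ keeps one there, and the induced pointed self-map of $\Conf_n(R_g)^+$ exists by functoriality in Proposition~\ref{prop: compactified conf_n is homotopy invariant}. Homotopy invariance then identifies these maps on homology with the $\Out(F_g)$-action described in \S\ref{subsec:Out action}. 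Throughout I use the orientation convention fixed above, under which the chain-level image of a cell is a signed sum of its image cells, each weighted by the local degree of the realizing map; since each of our maps is a piecewise homeomorphism built from affine coordinate maps, and the source and target cells of a given piece lie in the same $\R^S$ (the vertex point is never moved), this local degree equals the sign of the Jacobian on the relevant coordinates.

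For the flip $f_i$ I would take the map reversing the $i$-th petal, namely $x\mapsto 2i-1-x$ on the interval $(i-1,i)$ and the identity elsewhere; this is a homeomorphism fixing $v$ that sends $a_i\mapsto a_i^{-1}$. It is cellular and carries the cell with labels $(j_{i-1}\!+\!1)\ldots j_i$ on interval $i$ to the cell with those labels reversed. Its linear part on the $m=j_i-j_{i-1}$ coordinates of interval $i$ is $-\id$, with Jacobian $(-1)^m$, producing the sign $(-1)^{j_i-j_{i-1}}$. For the swap $s_i$ I would take the petal-exchanging homeomorphism, $x\mapsto x+1$ on $(i-1,i)$ and $x\mapsto x-1$ on $(i,i+1)$, identity elsewhere; it fixes $v$, realizes $a_i\leftrightarrow a_{i+1}$, and transports the two interval-blocks wholesale while preserving the internal order of each. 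As it is a translation on every coordinate, its Jacobian is $+1$, so it exchanges the two blocks with no sign. Both maps are honest cellular homeomorphisms, so their chain actions are exactly these signed permutations with no further analysis.

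The transvection is the main obstacle and carries all of the real content. I would realize $t_{12}$ by the fold map sending the first petal once around petal $1$ and then once around petal $2$, concretely $x\mapsto 2x$ on $(0,\tfrac{1}{2})$ (landing back in interval $1$) and $x\mapsto 2x$ on $(\tfrac{1}{2},1)$ (landing in interval $2$), with the identity on the remaining petals; this fixes $v$ and sends $a_1\mapsto a_1a_2$. Unlike the previous two maps it is not injective, so the image of a single open cell is no longer a single cell: as the configuration varies over the open cell $(12\ldots j_1|(j_1+1)\ldots j_2|\ldots)$, the number $k$ of first-petal points lying in $(0,\tfrac{1}{2})$ ranges over $0,\ldots,j_1$, and the remaining first-petal points $k+1,\ldots,j_1$ migrate into interval $2$, where they interleave with the resident points $j_1+1,\ldots,j_2$ in every way compatible with their internal orders. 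I would partition the open cell into the subregions indexed by a pair $(k,\sigma)$, where $\sigma\in\Psi_k$ is a shuffle of $(k+1,\ldots,j_1)$ and $(j_1+1,\ldots,j_2)$; on each subregion the fold map is an orientation-preserving homeomorphism onto the target cell $(12\ldots k|\sigma_{k+1}\ldots\sigma_{j_2}|\ldots)$. It then remains to verify: (i) cellularity, i.e. that the $(n-1)$-skeleton is preserved because $v$ is fixed, while within the top cell a non-vertex point reaches $v$ only along the codimension-one walls $\{x=\tfrac{1}{2}\}$ separating the subregions, so the top-cell image decomposes cleanly into top cells; (ii) that distinct pairs $(k,\sigma)$ give distinct target cells, each hit exactly once, so there are no cancellations or higher multiplicities; and (iii) that every coordinate map used ($x\mapsto 2x$ and the identity) has positive derivative, so each subregion contributes with local degree $+1$. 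Summing over the subregions then yields \eqref{eq:transvection} with all coefficients equal to $+1$, completing the computation.
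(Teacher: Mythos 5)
Your proposal is correct and follows essentially the same route as the paper: each generator is realized by an explicit piecewise-linear self-map of $R_g$ fixing the vertex (reversal for $F_i$, interval exchange for $S_i$, and the stretch-and-fold $x\mapsto 2x$ for $T_{12}$), and the chain-level formulas are read off from the resulting signed permutation or subdivision of cells, with the degree-$+1$ bookkeeping for the transvection handled exactly as in the paper's argument. Your write-up is somewhat more explicit than the paper's about cellularity and the local-degree verification, but the underlying construction is identical.
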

\begin{proof}
The flip and swap are realized by simple linear maps on the intervals $(i-1,i)\subset \R$, hence reorder the points in the claimed manner.  
Note that 
the flip $F_i$ reverses the direction of the $i$-th arc,  
inducing an orientation shift of $(-1)^{j_i-j_{i-1}}$.

The transvection $t_{12}$ is realized by a map $T_{12}:R_g\to R_g$ that stretches the first arc to twice its original length, then lays the latter half along the second arc. Any points that inhabit this latter half get distributed along  
the second arc. 
The locus of configurations in which a point lands exactly on $1\in (0,2)$, or on an existing point in the configuration, belongs to a lower dimensional skeleton of $\Conf_n(R_g)^+$, and therefore do not contribute to calculations on cellular chains. 
Note also that the stretch is an orientation-preserving linear map. 
Hence all cells map to other cells with degree $0$ or $1$, and the ones in the image have points $1,\ldots, k$
 on the first arc, for some $k\le j_1$, and
 some shuffle of the points $k+1,\ldots,j_1$ and $j_1+1,\ldots,j_2$ on the second arc.
\end{proof}

\begin{exmp}\label{ex:transvection}
Recall the case $n=2$ and $g=2$ 
depicted in  Figure \ref{fig:n=2}.
Figure~\ref{fig:transvection} depicts 
the transvection operation $T_{12}$ on the cells $(1|2)$ and $(12|)$, respectively, where stretching the first arc by a factor of 2 consequently stretches the cells so that they cover the cells appearing in the formula \eqref{eq:transvection}.

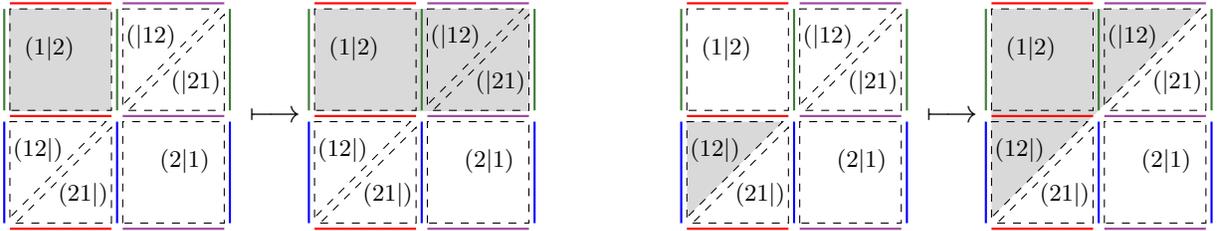
\begin{figure}[ht]
\begin{tikzpicture}
\begin{scope}[scale=1.5]
\draw[draw=none,fill=gray!30] (.05,1.05) rectangle (.95,1.95);
\draw[dashed,-] (0.1,0.05) -- (0.95,.05) -- (0.95,0.9)-- (0.1,0.05);
\draw[dashed,-] (0.05,0.1) -- (0.05,0.95) -- (0.9,0.95) -- (0.05,0.1);
\draw[dashed,-] (1.1,1.05) -- (1.95,1.05) -- (1.95,1.9)-- (1.1,1.05);
\draw[dashed,-] (1.05,1.1) -- (1.05,1.95) -- (1.9,1.95) -- (1.05,1.1);
\draw[dashed,-] (1.05,0.05)--(1.05,0.95)--(1.95,0.95)--(1.95,0.05)--(1.05,0.05);
\draw[dashed,-] (0.05,1.05)--(0.05,1.95)--(0.95,1.95)--(0.95,1.05)--(0.05,1.05);
\foreach \x in {0,1,2} {
\draw[thick,-,blue] (\x,0.05)--(\x,0.95);
\draw[thick,-,OliveGreen] (\x,1.05)--(\x,1.95);
\draw[thick,-,red] (0.05,\x)--(0.95,\x);
\draw[thick,-,Purple] (1.05,\x)--(1.95,\x);
};
\node at (1.6,0.6) {\scriptsize $(2|1)$};
\node at (0.4,1.6) {\scriptsize $(1|2)$};
\node at (0.3,0.7) {\scriptsize $(12|)$};
\node at (0.7,0.3) {\scriptsize $(21|)$};
\node at (1.3,1.7) {\scriptsize $(|12)$};
\node at (1.7,1.3) {\scriptsize $(|21)$};
\end{scope}
\begin{scope}[scale=1.5, shift={(2.7,0)}]
\node at (-.3,1) {$\longmapsto$};
\draw[draw=none,fill=gray!30] (0.05,1.05) rectangle (1.95,1.95);
\draw[dashed,-] (0.1,0.05) -- (0.95,.05) -- (0.95,0.9)-- (0.1,0.05);
\draw[dashed,-] (0.05,0.1) -- (0.05,0.95) -- (0.9,0.95) -- (0.05,0.1);
\draw[dashed,-] (1.1,1.05) -- (1.95,1.05) -- (1.95,1.9)-- (1.1,1.05);
\draw[dashed,-] (1.05,1.1) -- (1.05,1.95) -- (1.9,1.95) -- (1.05,1.1);
\draw[dashed,-] (1.05,0.05)--(1.05,0.95)--(1.95,0.95)--(1.95,0.05)--(1.05,0.05);
\draw[dashed,-] (0.05,1.05)--(0.05,1.95)--(0.95,1.95)--(0.95,1.05)--(0.05,1.05);
\foreach \x in {0,1,2} {
\draw[thick,-,blue] (\x,0.05)--(\x,0.95);
\draw[thick,-,OliveGreen] (\x,1.05)--(\x,1.95);
\draw[thick,-,red] (0.05,\x)--(0.95,\x);
\draw[thick,-,Purple] (1.05,\x)--(1.95,\x);
};
\node at (1.6,0.6) {\scriptsize $(2|1)$};
\node at (0.4,1.6) {\scriptsize $(1|2)$};
\node at (0.3,0.7) {\scriptsize $(12|)$};
\node at (0.7,0.3) {\scriptsize $(21|)$};
\node at (1.3,1.7) {\scriptsize $(|12)$};
\node at (1.7,1.3) {\scriptsize $(|21)$};
\end{scope}
\begin{scope}[scale=1.5, shift={(6,0)}]
\draw[dashed,-] (0.1,0.05) -- (0.95,.05) -- (0.95,0.9)-- (0.1,0.05);
\draw[dashed,fill=gray!30] (0.05,0.1) -- (0.05,0.95) -- (0.9,0.95) -- (0.05,0.1);
\draw[dashed,-] (1.1,1.05) -- (1.95,1.05) -- (1.95,1.9)-- (1.1,1.05);
\draw[dashed,-] (1.05,1.1) -- (1.05,1.95) -- (1.9,1.95) -- (1.05,1.1);
\draw[dashed,-] (1.05,0.05)--(1.05,0.95)--(1.95,0.95)--(1.95,0.05)--(1.05,0.05);
\draw[dashed,-] (0.05,1.05)--(0.05,1.95)--(0.95,1.95)--(0.95,1.05)--(0.05,1.05);
\foreach \x in {0,1,2} {
\draw[thick,-,blue] (\x,0.05)--(\x,0.95);
\draw[thick,-,OliveGreen] (\x,1.05)--(\x,1.95);
\draw[thick,-,red] (0.05,\x)--(0.95,\x);
\draw[thick,-,Purple] (1.05,\x)--(1.95,\x);
};
\node at (1.6,0.6) {\scriptsize $(2|1)$};
\node at (0.4,1.6) {\scriptsize $(1|2)$};
\node at (0.3,0.7) {\scriptsize $(12|)$};
\node at (0.7,0.3) {\scriptsize $(21|)$};
\node at (1.3,1.7) {\scriptsize $(|12)$};
\node at (1.7,1.3) {\scriptsize $(|21)$};
\end{scope}
\begin{scope}[scale=1.5, shift={(8.7,0)}]
\node at (-.3,1) {$\longmapsto$};
\draw[draw=none,fill=gray!30] (0.05,0.1) -- (0.05,1.95) -- (1.9,1.95);
\draw[dashed,-] (0.1,0.05) -- (0.95,.05) -- (0.95,0.9)-- (0.1,0.05);
\draw[dashed,-] (0.05,0.1) -- (0.05,0.95) -- (0.9,0.95) -- (0.05,0.1);
\draw[dashed,-] (1.1,1.05) -- (1.95,1.05) -- (1.95,1.9)-- (1.1,1.05);
\draw[dashed,-] (1.05,1.1) -- (1.05,1.95) -- (1.9,1.95) -- (1.05,1.1);
\draw[dashed,-] (1.05,0.05)--(1.05,0.95)--(1.95,0.95)--(1.95,0.05)--(1.05,0.05);
\draw[dashed,-] (0.05,1.05)--(0.05,1.95)--(0.95,1.95)--(0.95,1.05)--(0.05,1.05);
\foreach \x in {0,1,2} {
\draw[thick,-,blue] (\x,0.05)--(\x,0.95);
\draw[thick,-,OliveGreen] (\x,1.05)--(\x,1.95);
\draw[thick,-,red] (0.05,\x)--(0.95,\x);
\draw[thick,-,Purple] (1.05,\x)--(1.95,\x);
};
\node at (1.6,0.6) {\scriptsize $(2|1)$};
\node at (0.4,1.6) {\scriptsize $(1|2)$};
\node at (0.3,0.7) {\scriptsize $(12|)$};
\node at (0.7,0.3) {\scriptsize $(21|)$};
\node at (1.3,1.7) {\scriptsize $(|12)$};
\node at (1.7,1.3) {\scriptsize $(|21)$};
\end{scope}
\end{tikzpicture}
\caption{The transvection operations $T_{12}$ on the cell $(1|2)$, on the left, and $(12|)$ on the right, of $\Conf_2(R_2)^+$, as in Example \ref{ex:transvection}.}
\label{fig:transvection}
\end{figure}

\end{exmp}

\begin{rmk}\label{rmk:jordan blocks}
As mentioned before Lemma \ref{lem:out acting on chains}, 
the chain operators given above do not satisfy the relations between $f_i,s_i$ and $t_{12}$ in $\Out(F_g)$. 
For example, we have $(f_2t_{12})^2 = 1$, whereas the transvection operation $(F_2T_{12})^2$ on $R_g$ is not the identity map on the chain level. 

Another class of finite order elements playing a role in what follows are elements in $\Out(F_g)$ coming from isometries of genus $g$ graphs.  Since these have finite order, the action they induce on homology is indeed diagonalizable over $\overline \Q$.  Had these elements acted on the cellular chains with finite order, their action would also be diagonalizable. But we have encountered examples in which such operators have non-trivial Jordan blocks, e.g., the order $4$ rotation of the complete graph $K_4$.
\end{rmk}

\subsection{Separating into irreducibles} \label{sec:irreps}
The free resolution of $\widetilde{H}_*(\Conf_n(R_g)^+;\Q)$ as an $S_n$-representation opens the door to splitting up the calculation into the distinct irreducibles of $S_n$ when working rationally. 
This approach drastically reduces the size of the vector spaces involved, and allows for efficient extraction of specific irreducible multiplicities. Efficiency is particularly important, seeing that the vector spaces in the resolution of Lemma \ref{lem:resolution} have dimension $\sim n^{g-1}\cdot n!$.

Consider any associative ring $R$ and a morphism of free (left) $R$-modules $\psi: R^N\to R^M$. Representing elements of $R^N$ by row vectors, $\psi$ is uniquely represented  
by a matrix $A\in M_{N\times M}(R)$, which  acts on $R^N$ by \emph{right} multiplication.

Specializing this to the group ring $R = \Z[S_n]$, 
the differential $\partial: \Z[S_n]^N \to \Z[S_n]^M$ from Lemma \ref{lem:boundary} is represented by a matrix we shall denote $A_\partial$. We emphasize that the entries of $A_\partial$ are elements in $\Z[S_n]$, characterized in \eqref{eq:differential}. The underlying $\Z$-linear map would in principle be represented by a matrix that is $n!$ times bigger, but we will never use this larger matrix directly.
  The action of generators of $\Out(F_g)$ on this complex is similarly described as $\Z[S_n]$-valued matrices as determined by  Lemma \ref{lem:out acting on chains}, 
and the identification of cellular chains and elements in $\Z[S_n]$ is given in \eqref{eq:identifying cells with permutations}.

Now extend scalars to $\Q$. Lemma \ref{lem:splitting irreducibles} below records the general statement that allows one to split the homology calculations into isotypic components, where all matrices involved are substantially smaller than the original $A_\partial$. The only computational input needed is a realization of the irreducible representations of $S_n$ as explicit matrices, which has already been implemented in Sage \cite{sagemath}.

We recall the notion of multiplicity space. Let $\gp$ be a finite group and $\rho\col \gp\to \End_\C(V_\rho)$ a complex irreducible representation. For any complex $\gp$-representation $W$, the \emph{multiplicity space} of $\rho$ in $W$ is $W^{(\rho)} := W\otimes_\gp V_\rho^*$, where $V_\rho^*$ is the dual representation to $V_\rho$.  More generally, for a $\Z[\gp]$-module $W$, define $W^{(\rho)}$ to be the $\rho$-multiplicity space for the extension of scalars $W_{\C} := W\otimes \C$.
Given a set $\hat{\gp}$ of representatives of the isomorphism classes of irreducible complex $\gp$-representations, Schur's lemma gives a natural isomorphism
\[
W_{\C} \cong \bigoplus_{\tau\in \hat{\gp}} W^{(\tau)} \otimes_{\C} V_\tau.
\]
In particular, $\dim W^{(\tau)}$ is the  number of times $V_\tau$ occurs in $W_{\C}$, and 
any map of $\Q[\gp]$-representations $W\to U$ is uniquely determined by respective maps $W^{(\tau)} \to U^{(\tau)}$ for $\tau\in \hat{\gp}$.

\begin{lem} \label{lem:splitting irreducibles}
Let $\gp$ be a finite group, $\rho\col \gp\to \End_\C(V_\rho)$ a complex irreducible representation. Given a complex of regular $\gp$-representations \[C_\bullet = (\ldots \to \Z[\gp]^{n_i} \xrightarrow{\ \partial_i\ } 
\Z[\gp]^{n_{i-1}} \to \ldots),\] there is an isomorphism, natural in all $\gp$-equivariant maps of complexes,
\begin{equation}\label{eq:multiplicity in homology}
H_i(C_\bullet)^{(\rho^*)} \cong H_i\left( \ldots \to V_\rho^{n_i} 
\xrightarrow{\ \rho[\partial_i]\ }
V_\rho^{n_{i-1}} \to \ldots \right)
\end{equation}
where $\rho^*$ is the dual representation to $\rho$ and $\rho[\partial_i]\in M_{n_i\times n_{i-1}}(\operatorname{End}_\C(V_\rho))$ is the operator $V_\rho^{n_i} \to V_\rho^{n_{i-1}}$ obtained by applying $\rho$ entry-wise to $A_{\partial_i}\in M_{n_i\times n_{i-1}}(\Z[\gp])$.

In particular, the dimensions of the homology on the right hand side of \eqref{eq:multiplicity in homology} are the multiplicity with which $\rho^*$ occurs in $H_*(C_\bullet)$.
\end{lem}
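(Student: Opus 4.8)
The plan is to reduce everything to a single, functorial application of Schur's lemma, namely the multiplicity-space decomposition $W_\C \cong \bigoplus_{\tau\in\hat\gp} W^{(\tau)}\otimes_\C V_\tau$ recalled just before the statement. The key observation is that passing to the $\rho^*$-multiplicity space, $W\mapsto W^{(\rho^*)} = W\otimes_\gp V_{\rho^*}^* = W\otimes_\gp V_\rho$, is an \emph{exact} functor from $\Q[\gp]$-modules (equivalently $\C[\gp]$-modules after extending scalars) to $\C$-vector spaces. Exactness holds because $\C[\gp]$ is semisimple (Maschke), so every short exact sequence of $\gp$-representations splits and every additive functor preserves it; alternatively, $W^{(\rho^*)}$ is a direct summand of $W_\C$ cut out by the central idempotent associated to $\rho$, and multiplying by an idempotent is exact. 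Since taking homology commutes with exact functors, we will get
\[
H_i(C_\bullet)^{(\rho^*)} \cong H_i\big((C_\bullet)^{(\rho^*)}\big),
\]
and the entire content of the lemma is then to identify the complex $(C_\bullet)^{(\rho^*)}$ with the complex $(\ldots\to V_\rho^{n_i}\xrightarrow{\rho[\partial_i]} V_\rho^{n_{i-1}}\to\ldots)$ appearing on the right-hand side of \eqref{eq:multiplicity in homology}.

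First I would establish the identification on objects. For the free module $\Z[\gp]^{n_i}$, extension of scalars gives $\C[\gp]^{n_i}$, and applying $(-)^{(\rho^*)} = (-)\otimes_\gp V_\rho$ yields $(\C[\gp]\otimes_\gp V_\rho)^{n_i} \cong V_\rho^{n_i}$, using the canonical isomorphism $\C[\gp]\otimes_\gp V_\rho \cong V_\rho$ of the regular representation tensored down to a single copy of $V_\rho$. Next I would check the identification on morphisms, which is where the real bookkeeping lies. The map $\partial_i$ is represented, per the convention fixed before the lemma, by right multiplication by the matrix $A_{\partial_i}\in M_{n_i\times n_{i-1}}(\Z[\gp])$ acting on row vectors. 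Under the identification $(\Z[\gp]^{n_i})^{(\rho^*)}\cong V_\rho^{n_i}$, right multiplication by a scalar $r\in\Z[\gp]$ on the free module becomes the endomorphism $\rho(r)\in\End_\C(V_\rho)$ on each copy of $V_\rho$; applying this entry-by-entry to $A_{\partial_i}$ produces exactly the block matrix $\rho[\partial_i]\in M_{n_i\times n_{i-1}}(\End_\C(V_\rho))$ defined in the statement. The naturality claim is then automatic: a $\gp$-equivariant map of complexes induces, after applying the exact functor $(-)^{(\rho^*)}$, a map of the reduced complexes, and functoriality of the identifications above guarantees compatibility.

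The main obstacle I anticipate is purely a matter of conventions rather than mathematical depth: one must verify carefully that the \emph{left}-module structure on $\Z[\gp]^{n_i}$, the representation of $\partial_i$ by \emph{right} multiplication on \emph{row} vectors, and the definition of the multiplicity space as $W\otimes_\gp V_\rho$ all fit together so that the induced map is $\rho$ applied entrywise (and not, say, $\rho$ of the antipode, or the transpose of the expected matrix). Concretely, I would track a single basis element $e_j\otimes v$ through the isomorphism and confirm that right multiplication by $r\,e_{jk}$ sends it to $e_k\otimes \rho(r)v$; getting the variance right here is the crux. Once that computation is pinned down, the final sentence of the lemma—that $\dim H_i\big(V_\rho^{\bullet}, \rho[\partial_\bullet]\big)$ equals the multiplicity of $\rho^*$ in $H_i(C_\bullet)$—follows immediately, since by definition $\dim W^{(\rho^*)}$ counts the number of copies of $V_{\rho^*}$ in $W_\C$.
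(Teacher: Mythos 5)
Your proposal is correct and follows essentially the same route as the paper's proof: both exploit exactness of the multiplicity-space functor $(-)\otimes_{\gp}V_\rho$ (via semisimplicity of $\C[\gp]$), identify $\Z[\gp]\otimes_{\gp}V_\rho\cong V_\rho$ to reduce the complex to $V_\rho^{n_i}\xrightarrow{\rho[\partial_i]}V_\rho^{n_{i-1}}$, and then pass to homology. If anything, your attention to the row-vector/right-multiplication conventions is more careful than the paper, which asserts the identification of the differentials without comment.
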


\begin{proof}
Working with complex representations of a finite group, every representation splits as a sum of irreducibles. In particular, the tensor $(-)\otimes_{\gp} V_\rho$ is an exact functor and commutes with taking homology. But since the action $\rho$ gives an isomorphism $\Z[\gp]\otimes_{\gp} V_\rho \cong V_\rho$, we have a natural isomorphism of chain complexes,
\[
 \left(\ldots \to \Z[\gp]^{n_i}
 \xrightarrow{\ A_\partial\ }
 \Z[\gp]^{n_{i-1}} \to \ldots\right) \otimes_{\gp} V_\rho \quad \cong \quad \left( \ldots \to V_\rho^{n_i} 
 \xrightarrow{\ \rho[A_\partial]\ }
 V_\rho^{n_{i-1}} \to \ldots \right).
\]
Passing to the homology of these complexes proves the claim.
\end{proof}

Working with $\gp = S_n$, the formula \eqref{eq:multiplicity in homology} simplifies due to the fact that every $S_n$-representation is self-dual, i.e., $\rho^* \cong \rho$. Moreover, since all $S_n$-characters are defined over $\Q$, the same discussion applies already for rational rather than complex representations.
\begin{cor}
    There are isomorphisms, natural in all continuous self-maps of $R_g$,
    \begin{equation}
        \widetilde{H}_{n-1}(\Conf_n(R_g)^+;\Q)^{(\rho)} \cong \ker(\rho[A_\partial]) \quad \text{and} \quad
        \widetilde{H}_{n}(\Conf_n(R_g)^+;\Q)^{(\rho)} \cong \coker(\rho[A_\partial])        
    \end{equation}
    where $A_\partial$ is the $\Z[S_n]$-valued matrix representing $\Z[S_n]^{\binom{n+g-1}{g-1}}\overset{\partial}\to \Z[S_n]^{\binom{n+g-2}{g-1}}$ from Lemma \ref{lem:boundary}.
\end{cor}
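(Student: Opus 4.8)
The plan is to obtain the corollary as a direct specialization of the two foregoing lemmas, with the only real care going into the bookkeeping of directions and duality. First I would observe that the complex \eqref{eq:2-step resolution} of Lemma~\ref{lem:resolution} is concentrated in the two adjacent degrees $n$ and $n-1$, with no terms on either side; hence its homology is read off from the single differential $\partial$, giving $S_n$-equivariant identifications $\widetilde H_n(\Conf_n(R_g)^+;\Z)\cong\ker\partial$ and $\widetilde H_{n-1}(\Conf_n(R_g)^+;\Z)\cong\coker\partial$. Passing to $\Q$ is harmless: $\Q$ is flat over $\Z$, so the kernel and cokernel of this map of free modules are preserved, and we may work with the rational complex throughout.

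Next I would specialize Lemma~\ref{lem:splitting irreducibles} to $\gp=S_n$ and to this two-term complex. For each irreducible $\rho$ it supplies a natural isomorphism between $\widetilde H_i^{(\rho^*)}$ and the degree-$i$ homology of $V_\rho^{\binom{n+g-1}{g-1}}\xrightarrow{\ \rho[A_\partial]\ }V_\rho^{\binom{n+g-2}{g-1}}$, where $\rho[A_\partial]$ is the entrywise image of $A_\partial$. Two features special to $S_n$ then put this into the stated form: every irreducible $S_n$-representation is self-dual, so $\rho^*\cong\rho$ and hence $\widetilde H_i^{(\rho)}\cong\widetilde H_i^{(\rho^*)}$; and every $S_n$-character is rational, so the multiplicity spaces and the operator $\rho[A_\partial]$ may be taken over $\Q$, as the statement requires. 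Since the target complex has only two nonzero terms, its homology is exactly a kernel in the top degree and a cokernel in the bottom, which produces the two displayed isomorphisms.

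The one point I would treat with genuine care — and the only place a sign of trouble could hide — is matching which of $\ker$ and $\coker$ attaches to which degree. The matrix $A_\partial$ was fixed to act by right multiplication on row vectors, so that $\partial$ runs from degree $n$ to degree $n-1$; reading $\rho[A_\partial]$ in the corollary as the associated operator on multiplicity spaces (equivalently, as left multiplication on column vectors, i.e.\ the transpose of the row-vector operator) interchanges $\ker$ and $\coker$ relative to the naive reading, and it is precisely this convention that makes $\widetilde H_{n-1}^{(\rho)}\cong\ker\rho[A_\partial]$ and $\widetilde H_n^{(\rho)}\cong\coker\rho[A_\partial]$. I would state this convention explicitly once; at the level of multiplicities, which is what the computation actually uses, the two numbers are $\dim_\Q\ker\rho[A_\partial]$ and $\dim_\Q\coker\rho[A_\partial]$ and are unambiguous. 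Finally, naturality in continuous self-maps of $R_g$ comes for free: such a map induces an $S_n$-equivariant endomorphism of the complex (its effect on chains given by Lemma~\ref{lem:out acting on chains}, its existence by Proposition~\ref{prop: compactified conf_n is homotopy invariant}), and the isomorphism of Lemma~\ref{lem:splitting irreducibles} is natural with respect to such chain maps. I expect no serious obstacle beyond this careful statement of the direction convention; the mathematical content is a formal concatenation of Lemmas~\ref{lem:resolution} and \ref{lem:splitting irreducibles} with self-duality of symmetric-group representations.
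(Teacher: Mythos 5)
Your proposal is correct and follows essentially the same route as the paper, which states this corollary without separate proof as a direct specialization of Lemma~\ref{lem:splitting irreducibles} to the two-term complex of Lemma~\ref{lem:resolution}, invoking self-duality of $S_n$-irreducibles and rationality of $S_n$-characters exactly as you do. Your explicit discussion of the row-vector/right-multiplication convention is a worthwhile clarification the paper leaves implicit: it is indeed what reconciles $H_n\cong\ker\partial$, $H_{n-1}\cong\coker\partial$ at the chain level with the assignment of $\ker(\rho[A_\partial])$ to degree $n-1$ and $\coker(\rho[A_\partial])$ to degree $n$ in the statement.
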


\begin{rmk}Since $M_{N\times M}(\End_\C(V_\rho))\cong M_{Nd\times Md}(\C)$ for $d=\dim(V_\rho)$, the resulting calculation of the (co)kernel is reduced from involving $Nn!\times Mn!$ matrices to $Nd\times Md$ ones. 
For $S_n$, this reduces the matrix sizes by a factor of at least $\sqrt{n!}$ (see \cite{McKay}), e.g. for $S_{10}$ the largest irreducible has dimension $d=768$ compared to $10!\sim 3.6\times 10^6$.
\end{rmk}

\begin{cor}[\textbf{Sign representations}]
The $\Q_{\sgn}$-isotypic component of  $\widetilde{H}_k(\Conf_n(R_g)^+;\Q)$ has multiplicity $\binom{k+g-1}{g-1}$ for $k=n-1$ and $n$, and has multiplicity $0$ otherwise. 
Explicitly, every cell $(\sigma,\chi)$ gives a cycle $\sum_{\tau\in S_n} \sgn(\tau)\tau\cdot(\sigma,\chi)$, and different $S_n$-orbits of those are non-homologous.
\end{cor}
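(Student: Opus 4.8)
The plan is to deduce the entire statement from a single computation: that the differential $\partial$ of the $2$-step complex \eqref{eq:2-step resolution} vanishes identically on sign-isotypic components, i.e. $\sgn[A_\partial] = 0$. Granting this, Lemma~\ref{lem:splitting irreducibles} (with $\rho = \sgn$, so $V_\rho = \Q$) computes the sign-multiplicity spaces as the kernel and cokernel of the zero map; these are simply the full chain groups, whose sign-multiplicities equal the number of $S_n$-orbits of cells, namely $\binom{n+g-1}{g-1}$ in degree $n$ and $\binom{n+g-2}{g-1} = \binom{(n-1)+g-1}{g-1}$ in degree $n-1$, and $0$ in every other degree since the chain groups vanish there. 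Because the differential is zero there are no boundaries, so any basis of a multiplicity space remains a basis in homology; in particular cycles coming from distinct $S_n$-orbits are non-homologous, which is the final assertion.

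To carry this out I would work with the sign-symmetrization $Z(c) := \sum_{\tau\in S_n}\sgn(\tau)\,\tau\cdot c$ on cellular chains. Reindexing the sum shows at once that $\tau'\cdot Z(c) = \sgn(\tau')\,Z(c)$, so $Z(c)$ lies in the sign-isotypic component, and that $Z(\tau_0\cdot c) = \sgn(\tau_0)\,Z(c)$ for every $\tau_0\in S_n$. Since $S_n$ permutes the cells freely, each orbit contributes a copy of the regular representation, whose sign part is one-dimensional; thus for each orbit representative $(\id,\chi)$ the element $z_\chi := Z(\id,\chi)$ is nonzero and spans that part. These $z_\chi$ are precisely the cycles named in the statement, and they form a basis of the sign-isotypic part of the chains.

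The heart of the matter is to verify $\partial z_\chi = 0$ for every top-dimensional orbit $\chi$; the $(n-1)$-cells are automatically cycles, sitting in the bottom degree. Using equivariance of $\partial$ I would reduce to symmetrizing the boundary of a single representative, $\partial z_\chi = \sum_\tau\sgn(\tau)\,\tau\cdot\partial(\id,\chi)$, and then analyze $\partial(\id,\chi)$ arc by arc. By Lemma~\ref{lem:boundary} the contribution of arc $i$ is a ``remove-first'' face $d_i^{+}$ minus a ``remove-last'' face $d_i^{-}$. The key observation is that $d_i^{+}$ and $d_i^{-}$ have identical arc-distributions — they differ only in which label has been deleted — so they lie in the same free $S_n$-orbit; explicitly, the order-preserving bijection of their label sets extends to a single cycle $\tau_0$ with $d_i^{-} = \sgn(\tau_0)\,\tau_0\cdot d_i^{+}$ as oriented cells. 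Applying $Z$ and invoking $Z(\tau_0\cdot c) = \sgn(\tau_0)Z(c)$, the two factors of $\sgn(\tau_0)$ cancel, giving $Z(d_i^{+}) = Z(d_i^{-})$. Summing the resulting telescoping differences over all $g$ arcs yields $\partial z_\chi = 0$.

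The main obstacle is precisely this orientation bookkeeping. One must identify the cycle $\tau_0$ relating the two faces of each arc and then confirm — from the chosen orientation of $\R^S$ for $|S| = n-1$ together with the convention that transpositions reverse orientation — that it enters as $d_i^{-} = \sgn(\tau_0)\,\tau_0\cdot d_i^{+}$, so that $\sgn(\tau_0)$ appears squared after symmetrizing and drops out. A useful sanity check, which I would record, is the case $n = g = 2$ of Figure~\ref{fig:n=2}: there $\partial(12|) = (2|) - (1|)$, and a direct computation gives $Z(2|) = Z(1|) = (1|)+(2|)$, so indeed $\partial z_{(12|)} = 0$. Once the cancellation is established in general, the vanishing $\sgn[A_\partial] = 0$ follows and the corollary is proved.
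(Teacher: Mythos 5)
Your proposal is correct and follows essentially the same route as the paper: both reduce the corollary to showing that the sign-evaluated differential $\sgn[A_\partial]$ vanishes, and then read off the multiplicities as the (co)kernel of the zero map via Lemma~\ref{lem:splitting irreducibles}, with non-homologous orbit classes coming for free. The paper obtains the vanishing in one line --- under the identification \eqref{eq:identifying cells with permutations} a cell $(\sigma,\chi)$ corresponds to $\sgn(\sigma)\sigma^{-1}\in\Z[S_n]$, so $\sgn$ sends every cell to $+1$ and the paired $\pm$ terms in \eqref{eq:differential} cancel --- which packages exactly the orbit-and-orientation bookkeeping you carry out by hand with the antisymmetrizer $Z$.
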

Geometrically, these $\sgn$-isotypic cycles are represented by the loci of all configurations with specified numbers of points on each arc.
\begin{proof}
Lemma \ref{lem:boundary} gives a formula for the cellular boundary $\partial$ of $\Conf_n(R_g)^+$, and by Lemma \ref{lem:splitting irreducibles} the $\rho$-multiplicity space of $\widetilde{H}_k(\Conf_n(R_g)^+;\Q)$ for $k=n-1$ (and $n$) is computed by the cokernel (and kernel) of the linear operator $\rho[\partial]$.

As in \eqref{eq:identifying cells with permutations}, a cell $(\sigma_1\ldots|\ldots|\ldots \sigma_{n-1})$ corresponds to $\sgn(\sigma)\sigma^{-1}\in \Z[S_n]$, hence applying $\rho$ to such a cell results in the endomorphism $\sgn(\sigma)\rho(\sigma)^{-1}\in \operatorname{End}_\Q(V_\rho)$. In particular, when $(\rho = \sgn)$ every cell is sent by $\rho$ to $+1\in \End_\Q(\Q_\sgn)$, and \eqref{eq:differential} immediately degenerates to $\rho[\partial]=0$. We conclude that the $\sgn$-multiplicity space of the homology is isomorphic to that of the cellular chains, which is simply $\Q^{\binom{k+g-1}{g-1}}$. It further follows that $\partial$ restricts to $0$ on the $\sgn$-isotypic component of $C_k^{\operatorname{CW}}(\Conf_n(R_g)^+)$. Recalling that the projection onto the $\sgn$-isotypic component is given by anti-symmetrization, the claim follows.
\end{proof}

\section{From graph configuration space to tropical moduli space}

We now briefly recall the definition of the tropical moduli space $\Delta_{g,n}$ and establish a connection between $\Delta_{2,n}$ and a particular graph configuration space. We will then use the techniques of \S\ref{sec:homology of conf} to compute the homology of $\Delta_{2,n}$.

A {\em tropical curve} is a vertex-decorated metric graph. More precisely, it is a tuple of data $(G,w,m,l)$ where $G$ is a connected graph, possibly with loops and parallel edges, $w\colon V \to \Z_{\geq 0}$ a {\em weight} function on the set $V$ of vertices, $m\colon \{1,\dots,n\} \to V$ a {\em marking} function, and $l\colon E \to \R_{> 0}$ an {\em edge-length} function. These data must satisfy the following {\em stability} condition: for each $v\in V$, we require $2w(v) + \mathrm{val}(v) + |m^{-1}(v)| > 2$, where $\mathrm{val}$ is the graph theoretical valence. The genus of a tropical curve is $|E(G)| - |V(G)| + 1 + \sum_{v\in V(G)} w(v)$. Let $\Delta_{g,n}$ denote the {\em moduli space of genus $g$, $n$-marked tropical curves}.  This is a topological space that parametrizes isomorphism classes of tropical curves of genus $g$ and $n$ markings having total edge length 1. It is glued from quotients of the standard simplices  inside $\R_{\geq 0}^{E(G)}$  for graphs $G$, thus inheriting the quotient topology. For a formal definition, see \cite{CGP1}. 

A {\em bridge} in a connected graph is an edge whose deletion disconnects the graph. 
The {\em bridge locus}, denoted $\Delta^\br_{g,n} \subset \Delta_{g,n}$, is the closure in $\Delta_{g,n}$ of the locus of tropical curves with bridges.

Now let $g=2$.  Recall the graph $\Theta$, 
now regarded as a metric graph with two vertices $v_1,v_2$ and three edges $e_1,e_2,e_3$ between them of equal lengths. 
We say a tropical curve $(G,w,m,l)\in \Delta_{2,n}$ has {\em theta type} if $G$ is homeomorphic to $\Theta$ and its marking function $m$ is injective.

\begin{lem}\label{lem:homeo}
Let $\iso(\Theta)$ be the group of isometries of $\Theta$. We have a homeomorphism of topological spaces \[ ((\Delta^2)^\circ \times \Conf_n(\Theta))/\mathrm{Iso}(\Theta) \; \cong \; \Delta_{2,n}\setminus\Delta_{2,n}^{\mathrm{br}},\]
where $(\sigma,\tau)\in S_2\times S_3 \cong \iso(\Theta)$  acts on $(\Delta^2)^\circ$ through the permutation action of $\tau$ on $\R^3$ and on $\Conf_n(\Theta)$ through the natural action of $\iso(\Theta)$ on $\Theta$.
\end{lem}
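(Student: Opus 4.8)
The plan is to exhibit an explicit map and then verify it is an $\iso(\Theta)$-invariant homeomorphism onto its image, after first pinning down that image combinatorially. The heart of the argument---and the step I expect to be the main obstacle---is the purely combinatorial identification of the target: I will show that a genus-$2$ stable marked tropical curve lies in $\Delta_{2,n}\setminus \Delta_{2,n}^{\br}$ if and only if it has theta type. Since $\Delta_{2,n}^{\br}$ is by definition closed and is a union of open cells, a point in the open cell $\sigma_G$ avoids it precisely when $G$ is not a contraction of any bridge-bearing stable graph (the case $H=G$ covering when $G$ itself has a bridge). For $G$ of theta type this holds: $G$ is bridgeless of weight $0$, and one checks that expanding any vertex of a theta graph into a tree cannot create a bridge without violating stability---a trivalent vertex carries three arcs to the opposite vertex, so any splitting leaves both sides joined to it, while a valence-two vertex (necessarily marked, by stability) carries too few half-edges to split off a stable pendant. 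Conversely, every non-theta-type genus-$2$ curve falls into one of the cases: it has positive vertex weight, a non-injective marking, or its underlying space is homeomorphic to the dumbbell or the figure-eight. In each case I will produce a bridge-bearing coface, placing the curve in $\Delta_{2,n}^{\br}$: for a non-injective marking, split the repeated marks off across a new edge, which is then a bridge; the dumbbell already has a bridge; and the figure-eight is a contraction of the dumbbell.

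With the target identified, I construct the comparison map. View $\Theta$ as a fixed topological space with three unit-length arcs joining $v_1,v_2$, and send a pair $(\mathbf{l},\mathbf{x})\in (\Delta^2)^\circ\times \Conf_n(\Theta)$ to the tropical curve whose underlying metric graph is $\Theta$ with arc $i$ rescaled to length $l_i$ (so the total length is $\sum l_i =1$) and whose $n$ marks sit at the points $x_1,\dots,x_n$: a point in an arc interior subdivides it into a valence-two marked vertex, a point at $v_1$ or $v_2$ marks that trivalent vertex, and all vertex weights are set to $0$. Because the $x_i$ are distinct the marking is injective, so the image is a theta-type curve; continuity is immediate from the description of the cells of $\Delta_{2,n}$ as simplices of edge lengths. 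An isometry $(\sigma,\tau)\in S_2\times S_3\cong \iso(\Theta)$ applied simultaneously to $\mathbf{l}$ (through the permutation $\tau$ of the three arcs, with $\sigma$ fixing arc lengths) and to $\mathbf{x}$ (through the natural action on $\Theta$) produces an isomorphic marked metric graph, so the map is $\iso(\Theta)$-invariant and descends to a continuous map $\bar\Phi$ on the quotient.

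It remains to check that $\bar\Phi$ is a bijection onto the theta-type locus and then a homeomorphism. Surjectivity and injectivity are two sides of the same coin: a theta-type curve is exactly a metric graph homeomorphic to $\Theta$ of total length $1$ together with an injective marking, and choosing an identification of its underlying space with the fixed $\Theta$ recovers a pair $(\mathbf{l},\mathbf{x})$, the ambiguity in this choice being precisely $\iso(\Theta)$; hence $\bar\Phi$ is a bijection onto $\Delta_{2,n}\setminus\Delta_{2,n}^{\br}$. Finally I match cell structures to promote this continuous bijection to a homeomorphism. The source stratifies by the discrete data of which marks lie at $v_1,v_2$ and the orders of the remaining marks along the three arcs; on each such stratum $\Phi$ restricts to the standard homeomorphism from $(\Delta^2)^\circ$ times a product of within-arc position simplices onto the open simplex of sub-edge lengths parametrizing the corresponding theta-type cell of $\Delta_{2,n}$, compatibly with the face relations and the $\iso(\Theta)$-identifications. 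Since both spaces carry the quotient topology determined by these compatible cell decompositions, $\bar\Phi$ is a homeomorphism, completing the proof.
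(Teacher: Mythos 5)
Your construction is the same as the paper's: send $(\mathbf{l},\mathbf{x})$ to the theta-type curve obtained by subdividing $\Theta$ at the configuration points and rescaling the three arcs, observe that fibers are exactly $\iso(\Theta)$-orbits, and identify the image with $\Delta_{2,n}\setminus\Delta_{2,n}^{\br}$. The one substantive difference is that the paper outsources the combinatorial identification ``theta type $\iff$ not in $\Delta_{2,n}^{\br}$'' to \cite[Lemma 3.1]{chan2015topology}, whereas you re-prove it; your case analysis (positive weight, repeated marking, dumbbell/figure-eight core) is the right one and your coface constructions work. One imprecision there: for the forward direction at a trivalent vertex you argue that ``any splitting leaves both sides joined to the opposite vertex,'' but this fails when one side of the split receives none of the three half-edges --- that side is then a pendant tree whose new edge \emph{is} a bridge. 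What rules this out is not connectivity but stability: such a pendant has weight $0$, valence $1$, and at most one marking (by injectivity of $m$), so $2w+\mathrm{val}+|m^{-1}|\le 2$; the same marking count, rather than a half-edge count, is what kills the valence-two case. With that repair the argument is complete. On the final step you are in fact more careful than the paper, which passes from a continuous equivariant bijection to a homeomorphism without comment; your matching of the cell/stratum structures (or, alternatively, an openness or local-compactness argument) is a legitimate way to close that gap.
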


\begin{proof}
Let $(\Delta^2)^\circ$ denote the interior of the standard 2-simplex. There is a continuous map \[f: (\Delta^2)^\circ \times \Conf_n(\Theta) \to \Delta_{2,n}\setminus \Delta_{2,n}^\br\] given as follows. Let $X$ be a configuration of $n$ points on $\Theta$ and $(r_1,r_2,r_3) \in (\Delta^2)^\circ$. Then $f((r_1,r_2,r_3),X)$ is the isomorphism class of the following tropical curve $(G,w,m,l)$ of theta type. The graph $G$ is obtained from $\Theta$ by subdividing each edge at every point in the configuration. The marking function $m$ is set to have $m(i)$ be the vertex at point $i$ in the configuration $X$. The length function $l$ is obtained by scaling the 1-cells $e_1,e_2$, and $e_3$ to have lengths $r_1, r_2$, and $r_3$, respectively.

By \cite[Lemma 3.1]{chan2015topology}, a tropical curve in $\Delta_{2,n}$ has theta type if and only if it lies in $\Delta_{2,n}\setminus \Delta_{2,n}^\br$. Therefore $f$ is surjective.
Moreover, two elements in $(\Delta^2)^\circ \times \Conf_n(\Theta)$ have the same image if and only if they are in the same orbit under the action of $\iso(\Theta)$. So $f$ descends to a homeomorphism from the quotient space $((\Delta^2)^\circ \times \Conf_n(\Theta))/\iso(\Theta)$ to $\Delta_{2,n}\setminus \Delta_{2,n}^\br$.
\end{proof}

\cite[Theorem 1.1]{cgp-marked} establishes that $\Delta_{2,n}^\br$ is contractible. Therefore Lemma~\ref{lem:homeo} enables us to relate the reduced rational cohomology of $\Delta_{2,n}$ with that of $\Conf_n(\Theta)$.   

\begin{thm} \label{thm: red cohom of Delta2n and red cohom of Conf}
There is an $S_n$-equivariant homotopy equivalence
\begin{equation}\Delta_{2,n} \simeq (S^2\wedge \Conf_n(\Theta)^+)/\mathrm{Iso}(\Theta),
\end{equation}
where $\wedge$ is the smash product and $\iso(\Theta)\cong S_2\times S_3$ acts on the sphere $S^2$ by reversing orientation according to the sign of the permutation in $S_3$. 

In particular, there is an isomorphism of $S_n$-representations \begin{equation}
    \widetilde{H}^i(\Delta_{2,n};\Q) \cong (\sgn_3 \otimes \widetilde{H}^{i-2}(\Conf_n(\Theta)^+;\Q))^{\mathrm{Iso}(\Theta)},
\end{equation}
where $\sgn_3$ is the sign representation of $S_3$ in $\mathrm{Iso}(\Theta) \cong S_2\times S_3$,
and the superscript denotes the $\mathrm{Iso}(\Theta)$-invariant part. 
Similarly, there is an equivariant isomorphism \begin{equation}
    \widetilde{H}_i(\Delta_{2,n};\Q) \cong (\sgn_3 \otimes \widetilde{H}_{i-2}(\Conf_n(\Theta)^+;\Q))_{\mathrm{Iso}(\Theta)},
\end{equation}
where the subscript $\iso(\Theta)$ denotes the coinvariant quotient.
\end{thm}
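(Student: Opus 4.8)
The plan is to realize $\Delta_{2,n}$, up to $S_n$-equivariant homotopy, as the one-point compactification of its open dense stratum $\Delta_{2,n}\setminus\Delta_{2,n}^\br$, and then to massage that compactification---using that one-point compactification commutes with finite quotients and converts products into smash products---into the form $(S^2\wedge\Conf_n(\Theta)^+)/\iso(\Theta)$. Throughout, $S_n$ acts only on the $\Conf_n(\Theta)$ factor and commutes with every other operation, so $S_n$-equivariance will be automatic once the underlying maps are written down.

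First I would collapse the bridge locus. Since $\Delta_{2,n}$ is compact Hausdorff and carries a CW structure in which $\Delta_{2,n}^\br$ is a subcomplex preserved by $S_n$, the quotient map $q\colon\Delta_{2,n}\to\Delta_{2,n}/\Delta_{2,n}^\br$ is $S_n$-equivariant; because $\Delta_{2,n}^\br$ is contractible by \cite[Theorem 1.1]{cgp-marked} and the inclusion is a cofibration, $q$ is a homotopy equivalence. (For the rational (co)homology corollaries only naturality of $q$ is needed: a contractible, hence $\Q$-acyclic, subcomplex forces $q$ to induce an $S_n$-equivariant isomorphism on $\widetilde H_*(-;\Q)$.) Next, for a compact Hausdorff space with a closed subspace the quotient by that subspace is precisely the one-point compactification of the complement, so $\Delta_{2,n}/\Delta_{2,n}^\br\cong(\Delta_{2,n}\setminus\Delta_{2,n}^\br)^+$ $S_n$-equivariantly. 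I would then invoke Lemma \ref{lem:homeo} to rewrite the complement as $((\Delta^2)^\circ\times\Conf_n(\Theta))/\iso(\Theta)$ and push the compactification through. Writing $Z=(\Delta^2)^\circ\times\Conf_n(\Theta)$, finiteness of $\iso(\Theta)$ makes the quotient map proper, giving $(Z/\iso(\Theta))^+\cong Z^+/\iso(\Theta)$; local compactness of both factors gives $Z^+\cong((\Delta^2)^\circ)^+\wedge\Conf_n(\Theta)^+$. Finally $(\Delta^2)^\circ$ is homeomorphic to $\R^2$, so $((\Delta^2)^\circ)^+\cong S^2$, and the induced $\iso(\Theta)$-action is the one asserted: $S_3$ permutes the three barycentric coordinates of $\Delta^2\subset\R^3$, an orientation-preserving or -reversing linear action on the $2$-plane according to the sign of the permutation, hence acts on $S^2$ by orientation reversal according to $\sgn$, while $S_2$ acts trivially. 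Assembling these homeomorphisms with $q$ yields the $S_n$-equivariant homotopy equivalence.

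For the homology statement I would then pass to reduced rational homology. Over $\Q$ a finite quotient computes coinvariants, $\widetilde H_*(Y/\iso(\Theta);\Q)\cong\widetilde H_*(Y;\Q)_{\iso(\Theta)}$ via the transfer-invertible projection, and the reduced Künneth isomorphism for the smash product gives $\widetilde H_*(S^2\wedge\Conf_n(\Theta)^+;\Q)\cong\widetilde H_2(S^2;\Q)\otimes\widetilde H_{*-2}(\Conf_n(\Theta)^+;\Q)$. The orientation-reversal action identifies $\widetilde H_2(S^2;\Q)$ with $\sgn_3$ as an $\iso(\Theta)$-representation, producing both the tensor factor $\sgn_3$ and the degree shift by $2$. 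Combining these with $q$ gives exactly $\widetilde H_i(\Delta_{2,n};\Q)\cong(\sgn_3\otimes\widetilde H_{i-2}(\Conf_n(\Theta)^+;\Q))_{\iso(\Theta)}$; the cohomology statement is identical with invariants in place of coinvariants.

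The step I expect to require the most care is verifying equivariance and the orientation sign simultaneously through the chain of compactification identities. Checking that one-point compactification genuinely commutes with the finite quotient and with the product (the properness and local-compactness hypotheses) is routine but must be confirmed; the substantive point is tracking the $\iso(\Theta)$-action across $((\Delta^2)^\circ)^+\cong S^2$ and verifying that it is exactly orientation reversal by $\sgn_3$, since this is what injects the $\sgn_3$ twist into the final answer. A secondary subtlety is whether $q$ can be promoted to a genuine $S_n$-equivariant homotopy equivalence rather than merely an $S_n$-map inducing homology isomorphisms; for the stated (co)homology corollaries the latter suffices, so I would not belabor equivariant contractibility of $\Delta_{2,n}^\br$ unless the sharper homotopy-theoretic statement is genuinely required.
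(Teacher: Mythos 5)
Your proposal is correct and follows essentially the same route as the paper's proof: collapse the contractible bridge locus, identify the result with the one-point compactification of the open stratum via Lemma~\ref{lem:homeo}, commute the compactification past the finite quotient and the product to obtain $(S^2\wedge\Conf_n(\Theta)^+)/\iso(\Theta)$, and then apply K\"unneth together with the identification of $\widetilde{H}^2(S^2)$ with $\sgn_3$ as an $\iso(\Theta)$-representation. Your extra care about the cofibration hypothesis and the point-set conditions (properness, local compactness) only makes explicit what the paper leaves implicit.
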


\begin{proof}
By Lemma~\ref{lem:homeo}, we have \[ \Delta_{2,n}\setminus\Delta_{2,n}^{\mathrm{br}} \; \cong \; ((\Delta^2)^\circ \times \Conf_n(\Theta))/\mathrm{Iso}(\Theta).\] So their one-point compactifications are homeomorphic: \begin{equation} (\Delta_{2,n}\setminus\Delta_{2,n}^{\mathrm{br}})^+ \cong (((\Delta^2)^\circ \times \Conf_n(\Theta))/\mathrm{Iso}(\Theta))^+. \label{eqn: homeomorphic one-point compactification}
\end{equation} Since the bridge locus is contractible \cite[Theorem 1.1]{cgp-marked}, the left-hand side of \eqref{eqn: homeomorphic one-point compactification} is homotopy equivalent to $\Delta_{2,n}$.
The right-hand side of \eqref{eqn: homeomorphic one-point compactification} is homeomorphic to
the space $((\Delta^2)^\circ \times \Conf_n(\Theta))^+/\mathrm{Iso}(\Theta)$, where $\iso(\Theta)$ acts trivially on the point $\infty$. Then the first claim follows from the identification $(X \times Y )^+ = X^+ \wedge Y^+
$, along with the fact that $((\Delta^2)^\circ)^+\cong S^2$.

Passing to rational cohomology, we deduce 
\[ \widetilde{H}^i(\Delta_{2,n};\Q) \cong
\widetilde{H}^i((S^2\wedge \Conf_n(\Theta)^+)/\mathrm{Iso}(\Theta);\Q) \cong \widetilde{H}^i((S^2\wedge \Conf_n(\Theta)^+);\Q)^{\iso(\Theta)}.\]  
By the K\"unneth formula,
\[\widetilde{H}^*(S^2 \wedge \Conf_n(\Theta)^+;\Q) \cong \widetilde{H}^*(S^2;\Q) \otimes \widetilde{H}^*(\Conf_n(\Theta)^+;\Q).\] 
Since the reduced cohomology of $S^2$ is supported in degree 2, where it is $1$-dimensional, and $\iso(\Theta)$ acts through the orientation reversing action of $S_3$,
it follows that $\widetilde{H}^2(S^2)$ is $\iso(\Theta)$-equivariantly isomorphic to $\mathrm{triv}_2 \otimes \mathrm{sgn}_3$. We obtain the desired isomorphism of rational vector spaces, and every identification above is equivariant with respect to the $S_n$-actions induced by permuting marked points. 
\end{proof}

\subsection{Isometries of the graph $\Theta$.} \label{sec:isometries for theta} 
 The last ingredient needed to compute the homology of $\Delta_{2,n}$ using Theorem \ref{thm: red cohom of Delta2n and red cohom of Conf} and the techniques of \S\ref{sec:homology of conf} is the action of the graph automorphism group $\iso(\Theta)$ of the Theta graph $\Theta$ on $\widetilde{H}_*(\Conf_n(\Theta)^+;\Q)$.
 For computations, we choose the particular homotopy
 equivalence $\Theta\xrightarrow{\ \sim\ } R_2$ that collapes the edge $e_3$ and sends $e_i$ to the $i$-th arc in $R_2$ for $i=1,2$.  This map induces a homotopy equivalence on configuration spaces $\Conf_n(\Theta)^+\xrightarrow{\ \sim\ } \Conf_n(R_2)^+$, and a group homomorphism
 $\iso(\Theta) \to \Out(F_2)$ as in Proposition~\ref{prop:iso(G) acting through Out}.  We need only consider a generating set of $\iso(\Theta)$, for example:
\begin{itemize}
    \item the order 6 isomorphism, exchanging the vertices and permuting the edges $e_1, e_2,$ and $e_3$ in a $3$-cycle; and
    \item the top swap $t$, fixing the vertices and exchanging $e_1$ and $e_2$.
\end{itemize}
Finally, Lemma \ref{lem:out acting on chains} then gives formulas for the $\iso(\Theta)$-action on cellular chains, and 
 Lemma \ref{lem:splitting irreducibles} lets one calculate the multiplicity space of an individual  irreducible representation $\rho$.

\subsection{Tabulation of data} \label{sec:tabulation}
The above calculation was implemented in Sage \cite{sagemath}, and the resulting irreducible decompositions of the codimension $1$ homology $H_{n+1}(\Delta_{2,n};\Q)$ are shown in Tables~\ref{table:data} and~\ref{table:partialdata}. In these tables, for every partition $\lambda \vdash n$,  $\Specht_{\lambda}$ denotes the Specht module corresponding to $\lambda$, and they are written in reverse lexicographic ordering of partitions. 

Using the formula \cite{CFGP} for the equivariant Euler characteristic of $\Delta_{2,n}$ and the fact that the homology is concentrated only in degrees $n+1$ and $n+2$,  
knowing $H_{n+1}(\Delta_{2,n};\Q)$ is equivalent to knowing $H_{n+2}(\Delta_{2,n};\Q)$.
Please visit \href{https://github.com/ClaudiaHeYun/BCGY}{this URL}\footnote{\texttt{https://github.com/ClaudiaHeYun/BCGY}} for the code we used as well as a web application that presents the data in other ways, including
\begin{itemize}
    \item Frobenius characteristic of codimension $1$ homology $H_{n+1}(\Delta_{2,n};\Q)$ for $n\leq 10$;
    \item Frobenius characteristic of codimension $0$ homology $H_{n+2}(\Delta_{2,n};\Q)$ for $n\leq 10$;
    \item expansions of these symmetric functions in various bases for symmetric functions, e.g., the  elementary symmetric functions;
    \item partial expansions of $H_{n+1}(\Delta_{2,n};\Q)$ and $H_{n+2}(\Delta_{2,n};\Q)$ in the Schur basis for $n\leq 25$.
\end{itemize}

\begin{rmk}
We briefly discuss the performance of our Sage program. The highest $n$ for which we obtain the full homology representation is $n=10$, where the largest irreducible representation has dimension $768$. The matrix used to compute its multiplicity has dimensions $31488 \times 7680$. Computations of irreducible multiplicity for any $n$ never exceeded 24 hours, but computations for large irreducibles with $n\geq 11$ crashed due to insufficient memory.
\end{rmk}

Beyond $n=10$, we were only able to calculate multiplicities of Specht modules of small dimension. Table \ref{table:partialdata} shows partial irreducible decompositions of $H_{n+1}(\Delta_{2,n};\Q)$ for $11\le n \le 17$. The summands are presented as conjugate pairs of partitions, where the set of pairs is ordered reverse-lexicographically. The unknown multiplicities are indicated as $``\textcolor{red}{(? \text{ for }\lambda^*\leq\lambda\leq \lambda_0)}"$, indexed by all partitions that are lex-larger than their conjugate partition and lex-smaller than $\lambda_0$. Any missing partition outside of the unknown range occurs with multiplicity $0$, and similarly for their conjugate partitions.
\begin{table}
{\small
\begin{tabular}{|r|p{.9\textwidth}|}
\hline
    $n$ & Partial irreducible decomposition (listed as conjugate pairs of partitions) \\
\hline
        12 & $(\Specht_{(12)}) + (\Specht_{(11, 1)}) + (2\Specht_{(10, 2)}) + (3\Specht_{(3, 1^9)}) +(4\Specht_{(9, 3)} + 3\Specht_{(2^3, 1^6)}) +  (8\Specht_{(9, 2, 1)} + 7\Specht_{(3, 2, 1^7)}) + (3\Specht_{(9, 1^3)} + 4\Specht_{(4, 1^8)}) + (7\Specht_{(8, 4)} + 3\Specht_{(2^4, 1^4)}) + (19\Specht_{(8, 3, 1)}) + \textcolor{red}{(? \text{ for } \lambda^*\leq \lambda \leq (8,2^2))}$ \\
\hline        
        13 & $(\Specht_{(13)}+2\Specht_{(1^{13})})+(4\Specht_{(11,2)}+3\Specht_{(2^2,1^9)})+(5\Specht_{(10,3)}+5\Specht_{(2^3,1^7)}) + \textcolor{red}{(? \text{ for } \lambda^*\leq \lambda \leq (10,2,1))}$ \\
\hline        
        14 & $(\Specht_{(12,2)})+(4\Specht_{(12,1^2)})+(5\Specht_{(11,3)}+5\Specht_{(2^3,1^8)})+(5\Specht_{(3,1^{11})}) + \textcolor{red}{(? \text{ for } \lambda^*\leq \lambda \leq (11,2,1))}$ \\
\hline        
        15 & $(2\Specht_{(1^{15})}) + (5\Specht_{(2^2,1^{11})}) + (6\Specht_{(13,1^2)}) + \textcolor{red}{(? \text{ for } \lambda^*\leq \lambda \leq (12,3))}$ \\
\hline        
        16 & $(2\Specht_{(16)})+(\Specht_{(15,1)})+(4\Specht_{(14,2)})+(\Specht_{(14,1,1)} +7\Specht_{(3,1^{13})})+ \textcolor{red}{(? \text{ for } \lambda^*\leq \lambda \leq (13,3))}$ \\
\hline        
        17 & $(\Specht_{(17)}+2\Specht_{(1^{17})})+(8\Specht_{(15,2)}+7\Specht_{(2^2,1^{13})}) + (0\cdot \Specht_{(15,1^2)}) + \textcolor{red}{(? \text{ for } \lambda^*\leq \lambda \leq (14,3))}$ \\
\hline
\end{tabular}
}
\vspace{.2cm}

    \caption{Partial irreducible decomposition of $H_{n+1}(\Delta_{2,n};\Q)$ for $n\leq 17$.}
    \label{table:partialdata}
\end{table}

For $18 \leq n \leq 22$, we obtained multiplicities for $\Specht_{(n)}$, $\Specht_{(1^n)}$, $\Specht_{(n-1,1)}$ and $\Specht_{(2,1^{(n-2)})}$, and for $23 \le n \le 25$, we obtained multiplicities for $\Specht_{(n)}$, $\Specht_{(1^n)}$ only.  
All of the  
multiplicities are consistent with the following  Remark~\ref{rmk:triv-sgn} and Conjecture~\ref{conj:std}.

\begin{rmk}\label{rmk:triv-sgn}
There are explicit formulas for the multiplicities of the trivial and sign representations 
in $\widetilde{H}_*(\Delta_{2,n};\Q)$.
The multiplicity of the sign representation $\Specht_{(1^n)}$ in $H_*(\Delta_{2,n})$ is
\begin{equation*}
    \begin{cases}
    \lfloor \frac{n}{6} \rfloor & n \text{ even} \\
    0 & n \text{ odd}
    \end{cases} \quad\text{in degree $*=n+2$, and}\quad
    \begin{cases}
    0 & n \text{ even} \\
    \lfloor \frac{n}{6} \rfloor & n \text{ odd}
    \end{cases}
    \quad\text{for $*=n+1$.}
\end{equation*}

For the trivial representation $\Specht_{(n)}$, its multiplicity in $H_*(\Delta_{2,n})$ is
\begin{equation*}
    \begin{cases}
    0 & n\equiv 0 \mod 4 \\
    0 & n\equiv 1 \mod 4 \\
    \lfloor \frac{n+10}{12} \rfloor & n\equiv 2 \mod 4 \\
    \lfloor \frac{n+1}{12} \rfloor & n\equiv 3 \mod 4
    \end{cases} \quad\text{for $*=n+2$,}\;
    \begin{cases}
    \lfloor \frac{n+8}{12} \rfloor & n\equiv 0 \mod 4 \\
    \lfloor \frac{n-1}{12} \rfloor & n\equiv 1 \mod 4 \\
    0 & n\equiv 2 \mod 4 \\
    0 & n\equiv 3 \mod 4
    \end{cases}
    \quad\text{for $*=n+1$.}
\end{equation*}
Note that in these cases the multiplicity in $\widetilde{H}_*(\Delta_{2,n})$ is nonzero in exactly one degree $*$, which means they are also completely encoded in the $S_n$-equivariant Euler characteristic of $\Delta_{2,n}$ as computed by Faber (see \cite{CFGP}).

 These formulas were obtained in \cite[Theorems 6.2 and 6.4]{Conant-Costello-Turchin-Weed} who used hairy graph complexes.
 Alternatively, in \cite[Section 4.4]{gadish-hainaut}, it is explained that the calculations in \cite[Corollaries 19.8 and 19.10]{powell2018higher} translate to a complete description of the $\Out(F_g)$-representation on the trivial and sign isotypic components of $\widetilde{H}_*(\Conf_n(R_g)^+;\Q)$, and then \cite[Proposition 1.11]{gadish-hainaut} details how the latter translates to $\widetilde{H}_*(\Delta_{2,n})$.
We also learned through private communication with O. Tommasi that these multiplicities can be computed explicitly using dimensions of spaces of modular forms. 

Another way to derive the multiplicity formula for the sign representation was recently communicated to us by B.~Ward; it involves modular forms, via Lie graph homology.  The work \cite{ward-massey} relates $H_*(\Delta_{g,n};\Q)$ with the Lie graph homology which may be identified with $H_*^{Grp}(\Gamma_{g,n};\Q)$ studied in \cite{hatcher-vogtmann-homology-stability}, \cite{chkv}. (The groups $\Gamma_{g,n}$, which generalize $\Out(F_g) = \Gamma_{g,0}$ and $\Aut(F_g) = \Gamma_{g,1}$, were introduced in \cite{hatcher-homological}).  In forthcoming work, Ward calculates in genus $2$ that
\[\sum_{i\geq 0}\dim H_i(\Delta_{2,n})_{\mathrm{sgn}} = \left \lfloor \tfrac{n-2}{4} \right \rfloor - \dim H_{n+1} (\Gamma_{2,n})_\mathrm{sgn}.  \]
Then \cite[Theorem 3.10]{chkv} implies $\dim H_{n+1} (\Gamma_{2,n})_\mathrm{sgn} = \lfloor \frac{n-2}{4} \rfloor - \lfloor \frac{n}{6} \rfloor$, so the total dimension $\sum \dim H_i(\Delta_{2,n})_\mathrm{sgn}=\lfloor \frac{n}{6}\rfloor.$  Combining this with the knowledge of Euler characteristics \cite{CFGP} and the fact that $H_*(\Delta_{2,n})$ is concentrated in two degrees, the sign multiplicity formula above follows again.
\end{rmk}

Multiplicities of other irreducibles remain mysterious. For $\Specht_{(n-1,1)}$ and $\Specht_{(2,1^{n-2})}$, however, we observe the following pattern, verified computationally for up to $n=22$ marked points. 
\begin{conj}\label{conj:std}
For all $n\ge 2$, in the $S_n$-representation $H_*(\Delta_{2,n};\Q)$,
    the multiplicity of the standard representation $\Specht_{(n-1,1)}$ is \[
    \begin{cases}
    \left\lfloor \frac{n}{4} \right\rfloor & n\equiv 2 \mod 4\\
    \left\lfloor \frac{n+2}{6} \right\rfloor & n\equiv 1 \mod 4\\
    0 & \text{otherwise.}
    \end{cases}
\quad\text{for $*=n+2$,}\quad
    \begin{cases} \left\lfloor \frac{n}{12} \right\rfloor & n \equiv 0 \mod 4\\
    0 & \text{otherwise.}
    \end{cases}
    \quad\text{for $*=n+1$},
    \] 
and the multiplicity of $\Specht_{(2,1^{n-2})} \cong \sgn\otimes \Specht_{(n-1,1)}$ is 
   \[
   \begin{cases}
    0 & n \text{ is odd}\\
    \left\lfloor \frac{n+4}{6} \right\rfloor & n \text{ is even}.
    \end{cases}
   \quad \text{ for } *=n+2, 
   \text{ always } 0 \text{ for } *=n+1.\]
\end{conj}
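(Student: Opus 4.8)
The plan is to transport the question to the rose graph via Theorem~\ref{thm: red cohom of Delta2n and red cohom of Conf}, and then to exploit the identification $\Out(F_2)\cong\gl_2(\Z)$ so that the sought multiplicities become dimensions of spaces of modular forms. Since the $\iso(\Theta)$-action commutes with the $S_n$-action and we work rationally, taking $\rho$-isotypic parts commutes with the coinvariants functor $(\sgn_3\otimes-)_{\iso(\Theta)}$. Thus, writing $W=\widetilde{H}_{*-2}(\Conf_n(R_2)^+;\Q)$ and $W^{(\rho)}=\Hom_{S_n}(\rho,W)$ for the $\rho$-multiplicity space, regarded as an $\iso(\Theta)$-representation through $\iso(\Theta)\to\Out(F_2)$, the multiplicity of $\rho$ in $H_*(\Delta_{2,n};\Q)$ equals the multiplicity of $\triv_2\boxtimes\sgn_3$ in $W^{(\rho)}$. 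So it suffices to understand $W^{(\std_n)}$ and $W^{(\Specht_{(2,1^{n-2})})}$ as $\iso(\Theta)$-representations.

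The second step is to reduce these two ``standard'' multiplicity spaces to invariants, using the permutation representation. Because $\Q^n=\triv_n\oplus\std_n$, Frobenius reciprocity gives an $\iso(\Theta)$-equivariant isomorphism $W^{S_{n-1}}\cong W^{(\triv_n)}\oplus W^{(\std_n)}$, where $S_{n-1}\subset S_n$ is the stabilizer of the label $n$. Since $W^{S_n}=W^{(\triv_n)}$, this exhibits $W^{(\std_n)}$ as the complement $W^{S_{n-1}}\ominus W^{S_n}$ of $\iso(\Theta)$-representations. The analogous identity applied to $\sgn_n\otimes\Q^n=\sgn_n\oplus\Specht_{(2,1^{n-2})}$ expresses $W^{(\Specht_{(2,1^{n-2})})}$ as the difference of the $\sgn$-isotypic parts for $S_{n-1}$ and for $S_n$. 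In this way the whole computation reduces to the four $\iso(\Theta)$-representations $W^{S_n}$, $W^{(\sgn_n)}$, $W^{S_{n-1}}$ and $W^{(\sgn_{n-1})}$.

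The $S_n$-invariant and $S_n$-sign pieces are already known: these are exactly the trivial and sign isotypic components whose $\Out(F_2)$-content is computed in \cite{powell2018higher,gadish-hainaut} (equivalently the hairy graph homology of \cite{Conant-Costello-Turchin-Weed}), and Remark~\ref{rmk:triv-sgn} records the resulting $\Delta_{2,n}$ multiplicities. The genuinely new input is the pair $W^{S_{n-1}}$, $W^{(\sgn_{n-1})}$, which parametrize configurations with a single distinguished point among $n-1$ (anti)symmetrized points. Following the dictionary of \cite{gadish-hainaut}, I would identify these with the Hochschild--Pirashvili homology of $R_2$ carrying one marked basepoint, that is, a hairy graph complex with exactly one labeled hair in the sense of \cite{tsopmene-turchin-euler}, and extract the associated $\Out(F_2)$-representations directly from the $S_{n-1}$-(anti)invariant subcomplex of the explicit two-step complex of Lemmas~\ref{lem:boundary} and~\ref{lem:out acting on chains}.

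The final step is to read off the closed form through modular forms. Under $\Out(F_2)\cong\gl_2(\Z)$ the group $\iso(\Theta)\cong S_2\times S_3$ maps onto the order-$12$ dihedral subgroup of $\gl_2(\Z)$, and the functor $\Hom_{\iso(\Theta)}(\triv_2\boxtimes\sgn_3,-)$ selects precisely the part of a $\gl_2(\Z)$-representation governed, via an Eichler--Shimura-type correspondence, by the graded ring of modular forms for $\mathrm{SL}_2(\Z)$; its Hilbert series $\tfrac{1}{(1-t^4)(1-t^6)}$ accounts for the periods $4$, $6$, $12$ in the conjectured floor functions, and the kernel/cokernel split of the two-step complex separates the answers in degrees $*=n+1$ and $*=n+2$. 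I expect the main obstacle to be the new computation in the third step: whereas the fully (anti)symmetric case is handled by the functor-homology results of Powell--Vespa, obtaining $W^{S_{n-1}}$ and $W^{(\sgn_{n-1})}$ as explicit $\gl_2(\Z)$-representations for all $n$, and pinning down their modular-form content cleanly enough to yield the exact floor functions rather than merely matching the data through $n=22$, is the crux. A secondary subtlety, flagged in Remark~\ref{rmk:jordan blocks}, is that the chain-level operators realizing $\Out(F_2)$ need not have finite order, so one must pass to honest homology before invoking semisimplicity of the finite group $\iso(\Theta)$.
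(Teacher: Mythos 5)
First, a point of calibration: the paper does not prove this statement. It is stated as a conjecture, supported only by machine verification up to $n=22$ obtained by running the isotypic-splitting method of Lemma~\ref{lem:splitting irreducibles} on the two-step complex of Lemma~\ref{lem:resolution} for $\rho = \Specht_{(n-1,1)}$ and $\Specht_{(2,1^{n-2})}$; the paper itself notes that a proof only appeared later, via Powell--Vespa and Powell as explained in \cite{gadish-hainaut}. So the relevant comparison is between your proposed general argument and the paper's finite computation.

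Your structural reductions are sound and genuinely go beyond what the paper does: passing isotypic components through the coinvariants $(\sgn_3\otimes -)_{\iso(\Theta)}$ is legitimate in characteristic zero, and the Pieri-rule identities $\ind_{S_{n-1}}^{S_n}\triv = \triv\oplus\Specht_{(n-1,1)}$ and $\ind_{S_{n-1}}^{S_n}\sgn = \sgn\oplus\Specht_{(2,1^{n-2})}$ correctly exhibit the two standard-type multiplicity spaces as complements $W^{S_{n-1}}\ominus W^{S_n}$ and $W^{(\sgn_{n-1})}\ominus W^{(\sgn_n)}$ of $\Out(F_2)$-representations. This is an attractive reformulation that the paper does not contain. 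However, as a proof the proposal has a genuine gap, which you yourself flag as ``the crux'': the entire content of the conjecture is concentrated in determining $W^{S_{n-1}}$ and $W^{(\sgn_{n-1})}$ as $\Out(F_2)\cong\gl_2(\Z)$-representations for \emph{all} $n$, and in making the Eichler--Shimura-type dictionary precise enough to extract the exact floor functions (including the separation between degrees $n+1$ and $n+2$, i.e.\ between the kernel and cokernel of the two-step complex). Neither step is carried out; the Hilbert series $1/((1-t^4)(1-t^6))$ is offered as a heuristic for the periods $4$, $6$, $12$ rather than derived. Until that computation is done, the proposal establishes nothing beyond what Remark~\ref{rmk:triv-sgn} already gives for the trivial and sign components, and the statement remains exactly as open as the paper leaves it.
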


Conjecture~\ref{conj:std} was resolved a few months after a preprint of this paper appeared.  The third author observed with Hainaut in \cite[Example 6.9]{gadish-hainaut} that these multiplicities follow from the work of Powell--Vespa \cite{powell2018higher} and the more recent work of Powell \cite{powell}. An interpretation of these multiplicities in terms of modular forms would be pleasing.


\end{document}